\def\@journal{~}
\def\journal@url{~}
\newtheorem{thm}{Theorem}[section]
\newtheorem{cor}[thm]{Corollary}
\newtheorem{lem}[thm]{Lemma}
\newtheorem{prop}[thm]{Proposition}
\theoremstyle{definition}
\newtheorem{defn}[thm]{Definition}
\theoremstyle{remark}
\newtheorem{rem}[thm]{Remark}
\numberwithin{equation}{section}
\newcommand{\A}{\mathsf{A}}
\newcommand{\argmax}{\mathop{\mathrm{argmax}}}
\newcommand{\pen}{\mathrm{pen}}
\newcommand{\one}{\boldsymbol{1}}
\newcommand{\vol}{\mathop{\mathrm{vol}}}
\begin{document}

\begin{frontmatter}

\title{On the minimal penalty for Markov order estimation}
\runtitle{Markov order estimation}

\author{\fnms{Ramon} \snm{van Handel}\ead[label=e2]{rvan@princeton.edu}}
\address{Department of Operations Research\\
\quad and Financial Engineering\\
Princeton University\\
Princeton, NJ 08544 \\
USA \\ \printead{e2}}
\affiliation{Princeton University}
\runauthor{Ramon van Handel}

\begin{abstract}
We show that large-scale typicality of Markov sample paths implies 
that the likelihood ratio statistic satisfies a law of iterated 
logarithm uniformly to the same scale.  As a consequence, the penalized 
likelihood Markov order estimator is strongly consistent for penalties 
growing as slowly as $\log\log n$ when an upper bound is imposed on the 
order which may grow as rapidly as $\log n$.  Our method of proof, using 
techniques from empirical process theory, does not rely on the explicit 
expression for the maximum likelihood estimator in the Markov case and 
could therefore be applicable in other settings.
\end{abstract}

\begin{keyword}[class=AMS]
\kwd[Primary ]{62M05}           
\kwd[; secondary ]{60E15}	
\kwd{60F15}			
\kwd{60G42}			
\kwd{60J10}			
\end{keyword}

\begin{keyword}
\kwd{order estimation}
\kwd{uniform law of iterated logarithm}
\kwd{martingale inequalities}
\kwd{empirical process theory}
\kwd{large-scale typicality}
\kwd{Markov chains}
\end{keyword}

\end{frontmatter}

\section{Introduction}
\label{sec:intro}

For the purposes of this paper, a Markov chain is a discrete time 
stochastic process $(X_k)_{k\ge 1}$, taking values in a state space $\A$ 
of finite cardinality $|\A|<\infty$, such that the conditional law of 
$X_k$ given the past $X_1,\ldots,X_{k-1}$ depends on the most recent $r$ 
states $X_{k-r},\ldots,X_{k-1}$ only.  The smallest number $r$ for which 
this assumption is satisfied is called the \emph{order} of the Markov 
chain.  It is evident that the order of a Markov chain determines the most 
parsimonious representation of the law of the process.  Thus estimation of 
the order from observed data is a problem of practical interest, which 
moreover raises interesting mathematical questions at the intersection of 
probability, statistics and information theory.

Denote by $\mathbf{P}(x_{1:n})$ the probability of the sequence 
$x_{1:n}\in\A^n$ under the law $\mathbf{P}$, and denote by $\Theta^r$ the 
collection of all laws of Markov chains whose order is at most $r$.  As 
the parameter spaces $\Theta^r\subset \Theta^{r+1}$ are increasing, the 
naive maximum likelihood estimate of the order $\hat 
r_n=\argmax_{r}\sup_{\mathbf{P}\in\Theta^r} \mathbf{P}(x_{1:n})$ fails to 
be consistent.  Instead, we intoduce the penalized likelihood order 
estimator
$$
	\hat r_n = \argmax_{0\le r<\kappa(n)}\left\{
		\sup_{\mathbf{P}\in\Theta^r}\log\mathbf{P}(x_{1:n})
		- \pen(n,r)
	\right\},
$$
where $\pen(n,r)$ is a penalty function and $\kappa(n)$ is a cutoff 
function.  The estimator is called \emph{strongly consistent} if
$\hat r_n\to r^\star$ $\mathbf{P}^\star$-a.s.\ as $n\to\infty$
whenever the law of the observations $\mathbf{P}^\star$ is the law of a 
Markov chain whose order is $r^\star$.  We aim to understand which 
penalties and cutoffs yield a strongly consistent estimator.

Results of this type date back to Finesso \cite{Fin90}, who considers the
case where the order $r^\star$ of the Markov chain $\mathbf{P}^\star$ is 
known \emph{a priori} to be bounded above by some constant $r^\star<K$.
In this setting, Finesso shows that the penalty and cutoff
$$
	\pen(n,r) = C|\A|^r\log\log n,\qquad\qquad
	\kappa(n) = K
$$
yield a strongly consistent order estimator for a sufficiently large
constant $C$ (by \cite{CMR05}, p.\ 592, it suffices to choose $C>2|\A|$).
It can be argued from the law of iterated logarithm for martingales that a 
penalty of this form is the minimal penalty that achieves strong 
consistency, so that the result is essentially optimal (in the sense that 
the probability of underestimation of the order is minimized).  However, 
the requirement imposed by the knowledge of an \emph{a priori} upper bound 
on the order is a significant drawback and is unrealistic in many 
applications.

Order estimation in the absence of an upper bound has been investigated, 
for example, by Kieffer \cite{Kie93}.  However, the penalty used there is 
significantly larger than the minimal penalty in the case of an \emph{a 
priori} upper bound.  Kieffer's conjecture that the well known BIC penalty 
$\pen(n,r)=\frac{1}{2}|\A|^r(|\A|-1)\log n$ yields a strongly consistent 
order estimator was proved by Csisz{\'a}r and Shields \cite{CS00}.  The 
best result to date, due to Csisz{\'a}r \cite{Cs02}, shows that the 
penalty and cutoff
$$
	\pen(n,r) = c|\A|^r\log n,\qquad\qquad
	\kappa(n) = \infty
$$
yield a strongly consistent order estimator for any choice of the constant 
$c>0$.  However, this penalty is still larger than the minimal penalty 
obtained by Finesso in the case of an \emph{a priori} upper bound on the 
order.  These results raise a basic question \cite{CS00,Cs02}: is the 
$\log n$ growth of the penalty the necessary price to be paid for the lack 
of a prior upper bound on the order, or is the minimal possible penalty 
$\log\log n$ already sufficient for consistency in the absence of a prior 
upper bound?

\subsection{Results of this paper}

The purpose of this paper is twofold.  

First, we will show that a penalty of order $\log\log n$ does indeed 
suffice for consistency of the Markov order estimator, provided we impose 
a cutoff of order $\kappa(n)\sim\log n$.  Remarkably, this is precisely 
the same cutoff as is required to establish the consistency of minimum 
description length (MDL) order estimators \cite{Cs02}, of which the BIC 
penalty is an approximation.  As the $\log\log n$ penalty is much smaller 
than the BIC penalty for large $n$, this constitutes a significant 
improvement over previous results.  However, the basic question posed 
above is only partially resolved, as our results fall short of 
establishing consistency of the $\log\log n$ penalty in the absence of a 
cutoff $\kappa(n)=\infty$ as is done in \cite{CS00,Cs02} for the BIC penalty.

Second, we introduce a new approach for proving consistency of order 
estimators in the absence of a prior upper bound on the order.  The 
techniques used in previous work \cite{CS00,Cs02} rely heavily on rather 
delicate explicit computations which exploit the availability of a closed 
form expression for the maximum likelihood estimator in the Markov case. 
In contrast, our method of proof, which uses techniques from empirical 
process theory \cite{vdG00,Mas07}, is entirely different and can be 
applied much more generally. The present approach could therefore provide 
a possible starting point for extending the results of Csisz{\'a}r and 
Shields to problems where an explicit expression for the maximum 
likelihood is not available, such as the challenging problem of order 
estimation in hidden Markov models (see \cite{CMR05}, Chapter 15).

\subsection{Comparison with the approach of Csisz{\'a}r and Shields}

A direct consequence of our main result is that the penalty and cutoff
$$
	\pen(n,r) = C^\star|\A|^r\log\log n,\qquad\qquad
	\kappa(n) = \alpha^\star\log n
$$
with suitable constants $C^\star$ and $\alpha^\star$, where $\alpha^\star$ 
depends on the observation law $\mathbf{P}^\star$, yield a strongly 
consistent penalized likelihood estimator (in order to obtain a strongly 
consistent order estimator which does not require prior knowledge of 
$\mathbf{P}^\star$ it suffices to choose $\kappa(n)=o(\log n)$).
The upper bound $\kappa(n)=\alpha^\star\log n$ is inherited directly from 
the \emph{large scale typicality} property which plays a central role also 
in \cite{CS00,Cs02}.  Our main result states that if large scale typicality 
holds with an upper bound $r<\kappa(2n)$ on the order, then the likelihood 
ratio statistic satisfies a law of iterated logarithm uniformly for 
$r<\kappa(n)$ (the details are in the following section).  Strong 
consistency of the penalized likelihood order estimator then follows 
directly.

It is instructive to make a comparison with the approach of 
\cite{CS00,Cs02} for the penalty $\pen(n,r) = c|\A|^r\log n$.  The proof 
of strong consistency in this setting consists of two parts. First, 
large-scale typicality is used to prove strong consistency of the 
estimator with cutoff $\kappa(n)=\alpha^\star\log n$.  Next, a separate 
argument is employed to show that the larger orders $r\ge \alpha^\star\log n$ 
are negligible.  Our result improves the first part of the proof, as we 
show that the conclusion already holds for the smaller penalty $\pen(n,r)= 
C^\star|\A|^r\log\log n$.  However, the second part of the proof is 
missing in our setting, and it is unclear whether such a result could in 
fact be established. The resolution of this problem should effectively 
identify the minimal penalty for Markov order estimation in the absence of 
a cutoff.

Let us also note that the first part of the proof in \cite{Cs02} makes use 
of a sort of truncated law of iterated logarithm for the empirical 
transition probabilities of the Markov chain.  However, the result in 
\cite{Cs02} implies that the likelihood ratio statistic grows as $\log\log 
n$ only for orders as large as $\log\log n$, while the bound grows as 
$\log n$ for orders as large as $\log n$.  Our main result shows that such 
a bound is not the best possible, resolving in the negative a question 
posed in \cite{Cs02}, p.\ 1621.

\subsection{Organization of the paper}

In Section \ref{sec:main}, we set up the notation to be used throughout 
the paper and state our main results.  In Section \ref{sec:reduction}, we 
reduce the proof of our main result to the problem of establishing a 
suitable deviation bound.  The requisite deviation bound is proved in 
Section \ref{sec:proof}.  The proof is based on an extension of a maximal 
inequality of van de Geer \cite{vdG00}, which can be found in the 
Appendix.

\section{Main results}
\label{sec:main}

Let us fix once and for all the alphabet $\A$ of finite cardinality 
$|\A|<\infty$ and the canonical space $\Omega=\A^{\mathbb{N}}$ endowed 
with its Borel $\sigma$-field and coordinate process $(X_k)_{k\ge 1}$ 
($X_k(\omega)=\omega(k)$ for $\omega\in\Omega$). We will write 
$x_{m:n}$ for a sequence $(x_m,\ldots,x_n)\in\A^{n-m+1}$. Moreover, for 
any probability measure $\mathbf{P}$ on $\Omega$, we will write 
$\mathbf{P}(x_{m:n})$ and $\mathbf{P}(x_{m:n}|x_{r:s})$ instead of 
$\mathbf{P}(X_{m:n}=x_{m:n})$ and 
$\mathbf{P}(X_{m:n}=x_{m:n}|X_{r:s}=x_{r:s})$, respectively, whenever no 
confusion can arise.  

A Markov chain is defined by a probability measure $\mathbf{P}$ 
such that for some $r\ge 0$
$$
        \mathbf{P}(x_{1:n}) =
        \mathbf{P}(x_{1:r})
        \prod_{i=r+1}^n\mathbf{P}(x_i|x_{i-r:i-1})
        \quad\mbox{for all }n\ge r,~x_{1:n}\in\A^n.
$$
We will always presume that our Markov chains are time homogeneous:
$$
        \mathbf{P}(X_{i}=x_{r+1}|X_{i-r:i-1}=x_{1:r}) =
        \mathbf{P}(x_{r+1}|x_{1:r})
        \quad\mbox{for all }i>r,~x_{1:r+1}\in\A^{r+1}.
$$
We denote by $\Theta^r$ the set of all probability measures that satisfy 
these conditions for the given value of $r$ ($\Theta^0$ is the class of 
all i.i.d.\ processes).  Note that $\Theta^r\subset\Theta^{r+1}$ for all 
$r$.  The \emph{order} of a Markov chain $\mathbf{P}$ is the smallest 
$r\ge 0$ such that $\mathbf{P}\in\Theta^r$.

Throughout the paper we fix a distinguished Markov chain 
$\mathbf{P}^\star$ of order $r^\star$, representing the true probability 
law of an observed process.  \emph{We assume that $\mathbf{P}^\star$ is 
stationary and irreducible}.  On the basis of a sequence of observations 
$x_{1:n}$ we obtain an estimate $\hat r_n$ of the true order $r^\star$ by 
maximizing the penalized likelihood
$$
        \hat r_n = \argmax_{0\le r<\kappa(n)}\left\{
                \sup_{\mathbf{P}\in\Theta^r}\log\mathbf{P}(x_{1:n})
                - \pen(n,r)
        \right\},
$$
where $\pen(n,r)$ is a penalty function and $\kappa(n)$ is a cutoff
function.  If
$$
        \hat r_n\xrightarrow{n\to\infty}r^\star\quad
        \mathbf{P}^\star\mbox{-a.s.},
$$
the estimator is called \emph{strongly consistent}.

\begin{rem}
As discussed in \cite{CS00}, the assumption that $\mathbf{P}^\star$ is 
irreducible is necessary for the order estimation problem to be well 
posed, while stationarity of $\mathbf{P}^\star$ entails no loss of 
generality.  In particular, the latter claim follows from the fact that 
any irreducible Markov chain $\mathbf{P}$ is absolutely continuous with 
respect to a stationary Markov chain $\mathbf{P}_{\rm s}$ with the same 
transition probabilities, so that strong consistency under 
$\mathbf{P}_{\rm s}$ automatically holds under $\mathbf{P}$ also.
\end{rem}

Define for any sequence $a_{1:r}\in\A^r$ and $n\ge 1$ the random variable
$$
	N_n(a_{1:r}) = 
	\sum_{i=r+1}^n\one_{x_{i-r:i-1}=a_{1:r}},
$$
that is, $N_n(a_{1:r})$ is the number of times the sequence $a_{1:r}$ 
appears as a subsequence of $x_{1:n-1}$.  By the ergodic theorem, the 
approximation $N_n(a_{1:r})/(n-r)\approx\mathbf{P}^\star(a_{1:r})$ holds 
for large $n$.  The \emph{large scale typicality} property essentially 
requires that this approximation holds uniformly for all $a_{1:r}$ with 
$r<\rho(n)$.  As in \cite{CS00,Cs02}, this idea plays an essential role 
in the proof of our main result.

\begin{defn}
\label{defn:lst}
The process $\mathbf{P}^\star$ is said to satisfy the \emph{large-scale
typicality} property with cutoff $\rho(n)$ if there exists a constant
$\eta<1$ such that
$$
	\left|
	\frac{1}{\mathbf{P}^\star(a_{1:r})}\frac{N_n(a_{1:r})}{n-r}-1
	\right|<\eta\quad\mbox{for all }a_{1:r}\in\A^r
	\mbox{ with }\mathbf{P}^\star(a_{1:r})>0,~
	r<\rho(n)
$$
eventually as $n\to\infty$ $\mathbf{P}^\star$-a.s.
\end{defn}

We are now ready to state the main result of this paper, which can be 
viewed as a law of iterated logarithm for the likelihood ratio statistic. 
A similar result was established in \cite{Fin90}, Lemma 3.4.1 for the case 
of a fixed order $r>r^\star$.  Our key innovation is that here the result 
holds uniformly over the order $r^\star<r<\kappa(n)$, where $\kappa(2n)$ 
is a cutoff for which the large-scale typicality property holds.

\begin{thm}
\label{thm:main}
Let $\kappa(n)\le n/4$ be an increasing function, such that the process 
$\mathbf{P}^\star$ satisfies the large-scale typicality property with 
cutoff $\kappa(2n)$.  Then there is a nonrandom constant $C_0>0$ 
\emph{(}depending only on $\eta$\emph{)} such that
$$
        \sup_{r^\star<r<\kappa(n)}\frac{1}{|\A|^r}\left\{
        \sup_{\mathbf{P}\in\Theta^r}\log\mathbf{P}(x_{1:n})-
        \sup_{\mathbf{P}\in\Theta^{r^\star}}\log\mathbf{P}(x_{1:n})
        \right\}\le C_0\log\log n
$$
eventually as $n\to\infty$ $\mathbf{P}^\star$-a.s.
\end{thm}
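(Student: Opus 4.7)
My plan is to translate the likelihood-ratio statistic into a weighted sum of squared martingales indexed by contexts $b_{1:r}$ and letters $a$, and then to control this sum uniformly in $r<\kappa(n)$ via an empirical-process maximal inequality in the spirit of \cite{vdG00}.

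The first step is a reduction to a quadratic martingale functional. The inner supremum $\sup_{\mathbf{P}\in\Theta^r}\log\mathbf{P}(x_{1:n})$ is attained at the empirical-transition MLE $\hat p_r(a\mid b_{1:r})=N_n^+(b_{1:r}a)/N_n(b_{1:r})$, where $N_n^+(b_{1:r}a)$ counts occurrences of the $(r+1)$-block $b_{1:r}a$ in $x_{1:n}$. Writing the log-likelihoods against $\mathbf{P}^\star$ (which lies in every $\Theta^r$ with $r\ge r^\star$) recasts the bracketed quantity in the theorem as
$$
 \sum_{b_{1:r}} N_n(b_{1:r})\,\widehat D_r(b_{1:r})
 -\sum_{b_{1:r^\star}} N_n(b_{1:r^\star})\,\widehat D_{r^\star}(b_{1:r^\star})+O(r),
$$
where $\widehat D_r(b_{1:r})$ is the KL divergence of $\hat p_r(\cdot\mid b_{1:r})$ from $p^\star(\cdot\mid b_{1:r})$. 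The second sum is nonnegative and the $O(r)$ boundary contribution is negligible compared to $|\A|^r\log\log n$ on the range $r<\kappa(n)\le\log n$, so it suffices to bound the first sum from above. A second-order Taylor expansion of $\widehat D_r$ around $\hat p_r=p^\star$ (the linear term vanishes because $\hat p_r$ is a probability vector) together with large-scale typicality, which replaces $N_n(b_{1:r})$ by $(1\pm\eta)\,n\,\mathbf{P}^\star(b_{1:r})$ uniformly in $r<\kappa(n)$, reduces the claim to
$$
 \sup_{r^\star<r<\kappa(n)}\frac{1}{|\A|^r}
 \sum_{b_{1:r},\,a}\frac{M_n(b_{1:r},a)^2}{n\,\mathbf{P}^\star(b_{1:r}a)}
 \le C_0\log\log n
 \quad\mathbf{P}^\star\text{-a.s.\ eventually,}
$$
where
$$
 M_n(b_{1:r},a)=\sum_{i=r+1}^n\one_{X_{i-r:i-1}=b_{1:r}}\bigl(\one_{X_i=a}-p^\star(a\mid b_{1:r})\bigr)
$$
is a martingale whose predictable quadratic variation is at most $N_n(b_{1:r})\,p^\star(a\mid b_{1:r})$.

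The second step is to establish the displayed uniform bound. Each individual $M_n(b_{1:r},a)^2/n\mathbf{P}^\star(b_{1:r}a)$ is $O(\log\log n)$ a.s.\ by Freedman's exponential inequality combined with dyadic peeling in $n$. The difficulty is preserving the $\log\log n$ rate while the family of martingales has cardinality $\sum_{r<\kappa(n)}|\A|^{r+1}=O(n^{\alpha^\star\log|\A|})$, for which a naive union bound loses an extra factor of $\log n$. The remedy --- and the role of the van~de~Geer-type maximal inequality announced in the appendix --- is to exploit that the quantity to be bounded is the \emph{average} of $|\A|^{r+1}$ squared martingales, so it concentrates like a $\chi^2$ statistic with $|\A|^{r+1}$ degrees of freedom. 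In such a bound the parameter count enters only \emph{linearly} and is therefore absorbed by the $|\A|^r$ normalization; packaging this $\chi^2$-type concentration simultaneously over all $(r,b_{1:r},a)$ and over a dyadic partition in $n$ produces the claimed $\log\log n$ rate.

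The main obstacle is precisely this last step. Ordinary one-at-a-time LIL and exponential-tail union bounds are too weak to preserve $\log\log n$ over a polynomially growing family. The empirical-process point of view replaces the union bound by a single bracketing argument for the aggregated functional, whose key input is that the total predictable quadratic variation $\sum_{b_{1:r},a}\langle M(b_{1:r},a)\rangle_n/n\mathbf{P}^\star(b_{1:r}a)$ is at most $|\A|^r$, exactly matching the normalization on the left-hand side. Combined with the extension of van de Geer's maximal inequality developed in the appendix and with large-scale typicality, this should deliver the theorem.
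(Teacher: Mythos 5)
Your plan and the paper's proof share the same high-level architecture --- reduce to an exponential deviation bound for fixed $r$ and a dyadic block of times, then sum over $r<\kappa(2^{n+1})$ and over dyadic $n$, using large-scale typicality only along the dyadic subsequence --- but the mechanism you propose for obtaining the deviation bound is genuinely different and, as written, has two gaps.

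First, you derive the target statistic by substituting the explicit Markov MLE $\hat p_r(a\mid b_{1:r})=N_n^+(b_{1:r}a)/N_n(b_{1:r})$ and then Taylor-expanding the per-context KL divergence $\widehat D_r(b_{1:r})$ about $p^\star(\cdot\mid b_{1:r})$ to arrive at a $\chi^2$-type functional of the martingales $M_n(b_{1:r},a)$. The paper explicitly avoids this route: it never invokes the closed-form MLE, precisely so that the argument can transfer to models where no such formula exists. Instead it works with the sub-probability mixtures $\mathbf{\tilde P}=(\mathbf{P}+\mathbf{P}^\star)/2$, which makes the log-likelihood-ratio summands uniformly bounded on both sides, and then controls the supremum over $\mathbf{P}\in\Theta^r$ directly via the (random) Hellinger distance $H_n$, a peeling device, and bracketing entropy. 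Your Taylor step is not a cosmetic difference: the remainder $\sum_b N_n(b)\bigl(\widehat D_r(b)-\tfrac{1}{2}\sum_a(\hat p_r(a\mid b)-p^\star(a\mid b))^2/p^\star(a\mid b)\bigr)$ is of third order in $\hat p_r-p^\star$, and for contexts $b$ near the boundary $r\approx\kappa(n)$ the effective sample size $N_n(b)\approx n\mathbf{P}^\star(b)$ can be polynomially small in $n$, so the cubic term need not be dominated by the quadratic one context-by-context. Summed over $|\A|^r$ contexts this remainder must still be shown to be $O(|\A|^r\log\log n)$ uniformly and almost surely; nothing in your sketch addresses it, and it is exactly the kind of nuisance the mixture trick is designed to kill.

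Second, the step ``this concentrates like a $\chi^2$ with $|\A|^{r+1}$ degrees of freedom, so the parameter count enters only linearly'' is the right heuristic, but you have not said how to prove it, and the reference to the appendix does not close the gap on its own. The appendix's Proposition~\ref{prop:brkgauss} controls $\sup_{\theta\in\Theta}\max_{i\le n}M_i^\theta$ for a bracketed class of martingales, not a sum of squared martingales. To reach a $\chi^2$-type tail you would have to rewrite $\sum_{b,a}M_n(b,a)^2/(n\mathbf{P}^\star(ba))$ as a supremum over the Euclidean unit ball, bracket that ball, and run a peeling argument to separate the variance scale from the deviation scale --- which is precisely the content of Lemma~\ref{lem:peeling}, Lemma~\ref{lem:normcontrol}, Lemma~\ref{lem:entropy} and Corollary~\ref{cor:onion} in the paper, but there it is carried out against the Hellinger geometry and without ever passing through the explicit MLE. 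So your outline identifies the right target rate and the right combinatorics (absorbing the cardinality $|\A|^{r+1}$ into the normalization, then paying only $\log\log$ from a union bound over dyadic blocks), but it leaves unproved both the quadratic approximation and the uniform concentration that makes the $\chi^2$ heuristic rigorous. In effect your route is the Csisz\'ar--Shields-style route that the paper deliberately bypasses; it can probably be made to work, but it is not a complete proof as stated.
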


The following sections are devoted to the proof of this result. As a 
corollary, we obtain the following conclusion for the order estimation 
problem.

\begin{cor}
\label{cor:consist}
There exist constants $C^\star$ and $\alpha^\star$, where
$\alpha^\star$ depends on $\mathbf{P}^\star$, such that any penalty and 
cutoff that satisfy eventually as $n\to\infty$
$$
	\pen(n,r) = |\A|^rf(n)\log\log n,\qquad\qquad
	\kappa(n) \le \alpha^\star\log n,
$$
where $\kappa(n)\nearrow\infty$ and the function $f(n)$ satisfies
$$
	\liminf_{n\to\infty}f(n)\ge C^\star,\qquad\quad
	\lim_{n\to\infty}\frac{f(n)\log\log n}{n}=0,
$$
yield a strongly consistent Markov order estimator.
\end{cor}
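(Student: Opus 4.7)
The plan is to split the error event into overestimation $\{\hat r_n>r^\star\}$ and underestimation $\{\hat r_n<r^\star\}$ and show that each occurs only finitely often $\mathbf{P}^\star$-a.s. Since $\kappa(n)\nearrow\infty$, the true order $r^\star$ lies in the feasible set of the maximization eventually. Choose $\alpha^\star>0$ small enough that, whenever $\kappa(n)\le\alpha^\star\log n$, the process $\mathbf{P}^\star$ satisfies large-scale typicality with cutoff $\kappa(2n)$; such $\alpha^\star$ exists for any irreducible stationary Markov chain, cf.\ \cite{CS00,Cs02}. Then Theorem \ref{thm:main} applies. Fix $C^\star>C_0|\A|/(|\A|-1)$ with $C_0$ the constant from Theorem \ref{thm:main}.

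\textbf{Overestimation.} On $\{\hat r_n=r\}$ with $r^\star<r<\kappa(n)$, the definition of $\hat r_n$ forces
$$
\sup_{\mathbf{P}\in\Theta^r}\log\mathbf{P}(x_{1:n})-\sup_{\mathbf{P}\in\Theta^{r^\star}}\log\mathbf{P}(x_{1:n})
>\pen(n,r)-\pen(n,r^\star)\ge\frac{|\A|-1}{|\A|}|\A|^rf(n)\log\log n,
$$
where the last step uses $|\A|^r-|\A|^{r^\star}\ge(1-|\A|^{-1})|\A|^r$ for $r>r^\star$. Theorem \ref{thm:main} bounds the left side by $C_0|\A|^r\log\log n$ eventually, so along any subsequence supporting the event we would have $f(n)\le C_0|\A|/(|\A|-1)<C^\star$, contradicting $\liminf_n f(n)\ge C^\star$. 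Hence overestimation is ruled out eventually.

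\textbf{Underestimation.} Fix $r<r^\star$. By stationarity and ergodicity of $\mathbf{P}^\star$ (consequences of irreducibility) together with the fact that the MLE in $\Theta^r$ is attained at the empirical transition kernel, the ergodic theorem yields
$$
\frac{1}{n}\sup_{\mathbf{P}\in\Theta^r}\log\mathbf{P}(x_{1:n})\xrightarrow{n\to\infty}-H_r\quad\mathbf{P}^\star\text{-a.s.},
$$
where $H_r=H(X_{r+1}|X_{1:r})$ is computed under $\mathbf{P}^\star$. The sequence $(H_r)_{r\ge 0}$ is non-increasing and constant for $r\ge r^\star$, with $H_{r^\star-1}>H_{r^\star}$: were these equal, the conditional law of $X_{r^\star+1}$ given $X_{1:r^\star}$ would depend only on $X_{2:r^\star}$, contradicting that $r^\star$ is the true order. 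Thus $H_r\ge H_{r^\star-1}>H_{r^\star}$, so the log-likelihood gap
$$
\sup_{\mathbf{P}\in\Theta^{r^\star}}\log\mathbf{P}(x_{1:n})-\sup_{\mathbf{P}\in\Theta^r}\log\mathbf{P}(x_{1:n})
$$
grows linearly in $n$, while $\pen(n,r^\star)-\pen(n,r)\le|\A|^{r^\star}f(n)\log\log n=o(n)$ by the assumption $f(n)\log\log n/n\to 0$. Therefore $\hat r_n=r$ is impossible eventually, and a union over the finitely many $r<r^\star$ finishes the argument.

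\textbf{Main obstacle.} The substantive step is the overestimation argument, where Theorem \ref{thm:main} is invoked to convert large-scale typicality into a uniform $\log\log n$ control of the likelihood ratio across all orders $r<\kappa(n)$; underestimation reduces to the ergodic theorem. The restriction $\kappa(n)\le\alpha^\star\log n$ cannot be relaxed by this approach, as it is inherited directly from the typicality hypothesis required by Theorem \ref{thm:main}.
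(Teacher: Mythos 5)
Your proof is correct and takes essentially the same approach as the paper: overestimation is ruled out by invoking Theorem \ref{thm:main} together with the bound $|\A|^r-|\A|^{r^\star}\ge(1-|\A|^{-1})|\A|^r$, exactly as in the paper's proof, with only a slightly different choice of $C^\star$. The one cosmetic difference is in the underestimation step, which the paper dispatches by citing \cite{CS00}, Proposition A.1, whereas you re-derive the linear likelihood gap directly from the ergodic theorem and the strict drop of the conditional entropy $H(X_{r+1}|X_{1:r})$ at $r=r^\star$; both arguments are valid.
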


\begin{proof}
First, it is easy to see (\cite{CS00}, Proposition A.1) 
that $\mathbf{P}^\star$-a.s.
$$
        \limsup_{n\to\infty}
        \frac{1}{n}\left\{
        \sup_{\mathbf{P}\in\Theta^r}\log\mathbf{P}(x_{1:n})-
        \sup_{\mathbf{P}\in\Theta^{r^\star}}\log\mathbf{P}(x_{1:n})
        \right\} \le -C
$$
for some constant $C>0$ and all $r<r^\star$.  As $\pen(n,r)/n\to 0$
as $n\to\infty$, this implies that $\mathbf{P}^\star$-a.s.\ we have
eventually as $n\to\infty$
$$
        \sup_{\mathbf{P}\in\Theta^r}\log\mathbf{P}(x_{1:n})-\pen(n,r)
        <\sup_{\mathbf{P}\in\Theta^{r^\star}}\log\mathbf{P}(x_{1:n})-
        \pen(n,r^\star)
        \quad\forall\,r<r^\star.
$$
As $\kappa(n)\ge r^\star$ for $n$ sufficiently large, this shows that
$\liminf_{n\to\infty}\hat r_n\ge r^\star$ $\mathbf{P}^\star$-a.s.

On the other hand, it is shown in \cite{CS00,Cs02} that the large-scale 
typicality property holds with cutoff $\kappa(2n)\le\alpha^\star\log 2n$
for some constant $\alpha^\star$ which depends on $\mathbf{P}^\star$
(the constant $\eta$ in Definition \ref{defn:lst} may be fixed 
arbitrarily).  By Theorem \ref{thm:main},
$$
        \sup_{r^\star<r<\kappa(n)}\frac{1}{\pen(n,r)}\left\{
        \sup_{\mathbf{P}\in\Theta^r}\log\mathbf{P}(x_{1:n})-
        \sup_{\mathbf{P}\in\Theta^{r^\star}}\log\mathbf{P}(x_{1:n})
        \right\}\le \frac{|\A|-1}{2|\A|}
$$
eventually as $n\to\infty$ $\mathbf{P}^\star$-a.s., provided $C^\star$ is 
chosen sufficiently large.  Note that
$$
        \frac{1}{\pen(n,r)-\pen(n,r^\star)} =
        \frac{1}{\pen(n,r)}\frac{|\A|^r}{|\A|^r-|\A|^{r^\star}}
        \le \frac{1}{\pen(n,r)}\frac{|\A|}{|\A|-1}
$$
for all $r>r^\star$, so we find that $\mathbf{P}^\star$-a.s.\ we have
eventually as $n\to\infty$
$$
        \sup_{\mathbf{P}\in\Theta^r}\log\mathbf{P}(x_{1:n})
        -\pen(n,r) <
        \sup_{\mathbf{P}\in\Theta^{r^\star}}\log\mathbf{P}(x_{1:n})
        -\pen(n,r^\star)
$$
for all $r^\star<r<\kappa(n)$.  Thus
$\limsup_{n\to\infty}\hat r_n\le r^\star$ $\mathbf{P}^\star$-a.s.
\end{proof}

\begin{rem}
The proofs of large-scale typicality in \cite{CS00,Cs02} actually 
establish a slightly stronger result, where the constant $\eta$ in 
Definition \ref{defn:lst} is replaced by $n^{-\beta}$ for some $\beta>0$.
This improvement is not needed for Theorem \ref{thm:main} to hold.
\end{rem}

\begin{rem}
Theorem \ref{thm:main} states that the constant $C_0$ depends only on the 
value of $\eta$ in Definition \ref{defn:lst}.  Unfortunately, the 
constants obtained by our method of proof are expected to be far from 
optimal; one can read off a value for $C_0$ of order $10^6$ in the proof 
of Theorem \ref{thm:main}, which is likely excessively large.
\end{rem}

\begin{rem}
It is not difficult to establish that there is a constant $C$ such that
$$
        \frac{1}{n}\left\{
        \sup_{\mathbf{P}\in\Theta^r}\log\mathbf{P}(x_{1:n})-
        \sup_{\mathbf{P}\in\Theta^{r^\star}}\log\mathbf{P}(x_{1:n})
        \right\} \le C
$$
for all $n$ and $r$.  It follows that
$$
        \sup_{r>(\log|\A|)^{-1}\log n}\frac{1}{\pen(n,r)}\left\{
        \sup_{\mathbf{P}\in\Theta^r}\log\mathbf{P}(x_{1:n})-
        \sup_{\mathbf{P}\in\Theta^{r^\star}}\log\mathbf{P}(x_{1:n})
        \right\}\le \frac{|\A|-1}{2|\A|}
$$
eventually as $n\to\infty$.  In order to obtain a version of Corollary 
\ref{cor:consist} with $\kappa(n)=\infty$, the key difficulty is therefore 
to deal with orders in the range $\alpha^\star\log n\le 
r\le(\log|\A|)^{-1}\log n$.  It is an open question whether it is possible 
to close this gap.
\end{rem}

\section{Reduction to a deviation bound}
\label{sec:reduction}

The proof of Theorem \ref{thm:main} consists of two steps.  In this 
section, we will prove the result assuming that the likelihood ratio 
statistic satisfies a certain deviation bound.  The requisite deviation 
bound, which is stated in the following Proposition, will be proved in the 
next section.

\begin{prop}
\label{prop:deviation}
Define $F_n=G_n\cap G_{2n}$, where $G_n$ denotes the event
$$
	\Bigg\{
        \Bigg|
        \frac{1}{\mathbf{P}^\star(a_{1:r})}
        \frac{N_n(a_{1:r})}{n-r}-1\Bigg|\le\eta
        \mbox{ for all }a_{1:r}\in\A^r
        \mbox{ with }
        \mathbf{P}^\star(a_{1:r})>0,~
        r<\rho(n)
        \Bigg\},
$$
with $\rho(n)$ increasing and $\rho(n)\le n/2$.  Then there exist
constants $C_1,C_1',C_2>0$, which can be chosen to depend only on $\eta$,
such that
$$
        \mathbf{P}^\star\left[
        F_n\cap\max_{i=n,\ldots,2n}
        \left\{\sup_{\mathbf{P}\in\Theta^r}\log\mathbf{P}(x_{1:i})-
        \log\mathbf{P}^\star(x_{1:i}|x_{1:r})\right\}\ge \varepsilon    
        \right] \le
        C_1'e^{-\varepsilon/C_1}
$$
for all $n\ge 1$, $r^\star<r<\rho(n)$, and $\varepsilon\ge C_2|\A|^r$.
\end{prop}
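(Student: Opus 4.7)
The plan is to evaluate the left-hand side explicitly through the maximum likelihood estimator, dominate it on $F_n$ by a chi-squared type statistic using typicality, and then control the latter by a martingale maximal inequality applied uniformly in $(i,a,b)$.

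Since the initial distribution in $\Theta^r$ is unconstrained, the supremum is attained at $\mathbf{P}(x_{1:r})=1$ together with $\mathbf{P}(b|a)=\hat P_i(b|a):=N_i(a,b)/N_i(a)$. Because $r>r^\star$, the true transitions $\mathbf{P}^\star(b|a)$ depend only on the last $r^\star$ coordinates of $a\in\A^r$, and one obtains
$$
\sup_{\mathbf{P}\in\Theta^r}\log\mathbf{P}(x_{1:i})-\log\mathbf{P}^\star(x_{1:i}|x_{1:r})
=\sum_{a\in\A^r}N_i(a)\,D\!\left(\hat P_i(\cdot|a)\,\|\,\mathbf{P}^\star(\cdot|a)\right),
$$
where $D$ is the Kullback--Leibler divergence. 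On $F_n=G_n\cap G_{2n}$, typicality together with the monotonicity of $N_i$ in $i$ keeps $\hat P_i(b|a)/\mathbf{P}^\star(b|a)$ in a compact subinterval of $(0,\infty)$ uniformly in $i\in[n,2n]$, $r<\rho(n)$, and all $(a,b)$ with $\mathbf{P}^\star(a,b)>0$. The quadratic comparison $D(p\|q)\le c(\eta)\chi^2(p,q)$ valid in this regime then dominates the displayed quantity by
$$
c(\eta)\sum_{(a,b):\mathbf{P}^\star(a,b)>0}\frac{\bigl(N_i(a,b)-N_i(a)\mathbf{P}^\star(b|a)\bigr)^2}{N_i(a)\mathbf{P}^\star(b|a)}.
$$

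For each $(a,b)$ with $\mathbf{P}^\star(a,b)>0$, the signed counts
$$
M_k(a,b):=\sum_{j=r+1}^k\bigl(\one_{x_{j-r:j}=(a,b)}-\mathbf{P}^\star(b|a)\one_{x_{j-r:j-1}=a}\bigr)
$$
form a martingale with bounded increments whose predictable quadratic variation equals $N_k(a)\mathbf{P}^\star(b|a)(1-\mathbf{P}^\star(b|a))$. On $F_n$ the denominator $N_i(a)\mathbf{P}^\star(b|a)$ is deterministically comparable to $(i-r)\mathbf{P}^\star(a,b)$ uniformly in $i\in[n,2n]$, so a Freedman-type bound on $M_i(a,b)^2$ translates into an exponential tail for each term of the chi-squared sum. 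The role of the Appendix's extension of van de Geer's maximal inequality is to deliver this tail simultaneously across $i\in[n,2n]$ and across the $|\A|^r$ strings $a$; the non-negativity of the summands, combined with the fact that contributions from distinct $a$ live on disjoint time-index subsequences (permitting an MGF factorization), lets one aggregate the term-wise Bernstein estimates into a bound of the form $\mathbf{P}[F_n\cap\max_{i\in[n,2n]}D_i^r\ge\varepsilon]\le C_1'e^{-\varepsilon/C_1}$ once $\varepsilon\ge C_2|\A|^r$, the threshold reflecting the $(|\A|-1)|\A|^r$ effective degrees of freedom in $\Theta^r$.

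The main obstacle is the simultaneous handling of three distinct sources of stochastic irregularity: the supremum over the high-dimensional family $\Theta^r$, absorbed by the explicit MLE reduction to a sum indexed by $a$; the maximum over $i\in[n,2n]$, handled by the Appendix's maximal inequality without the $\log n$ loss that a crude union over $i$ or a Sanov-type MGF bound would incur; and the random, non-predictable normalizer $N_i(a)$, which prevents direct application of standard Freedman-type estimates but is neutralized on $F_n$ by substituting its deterministic proxy $(i-r)\mathbf{P}^\star(a)$.
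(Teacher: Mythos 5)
Your proposal takes a genuinely different route from the paper: you reduce via the explicit MLE to $\sum_a N_i(a)\,D(\hat P_i(\cdot|a)\,\|\,\mathbf{P}^\star(\cdot|a))$, dominate by a chi-squared form, and identify the per-$(a,b)$ martingales. The paper explicitly avoids the MLE formula; it instead applies a mixture trick $\tilde{\mathbf{P}}=(\mathbf{P}+\mathbf{P}^\star)/2$ to bound the log-ratios, peels over Hellinger rings $H_n(\mathbf{P},\mathbf{P}^\star)\le 2^k\varepsilon$, controls the bracketing entropy of $\Theta^r$ in the Hellinger metric, and then invokes Proposition~\ref{prop:brkgauss}. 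Your steps up to and including the martingale structure of $M_i(a,b)$ are correct ($D\le\chi^2$ holds unconditionally, so the constant $c(\eta)$ is unnecessary, though the typicality bound on $\hat P_i(b|a)/\mathbf{P}^\star(b|a)$ has a minor boundary issue since strings of length $r+1$ may escape the $G_n$ condition when $r=\rho(n)-1$, and in any case typicality gives control on $N_i(a)$ which is what is actually needed).

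The genuine gap is the aggregation across the roughly $|\A|^{r+1}$ pairs $(a,b)$. To obtain $C_1'e^{-\varepsilon/C_1}$ with $C_1$ \emph{independent of $r$}, a naive union bound over $(a,b)$ is fatal: it forces the exponent to scale like $\varepsilon/|\A|^{r+1}$, which fails as $\varepsilon$ grows at fixed $r$. Your remedy --- that ``contributions from distinct $a$ live on disjoint time-index subsequences, permitting an MGF factorization'' --- is not a valid argument. Disjointness of the times at which distinct $a$-cells are updated is a pointwise property of the sample path; it does not make the accumulated increments independent (the random set of update times for cell $a$ is itself correlated with the values observed in cell $a'$), and it does not yield $\mathbf{E}[e^{\lambda\sum_{a,b}(\cdots)}]\le\prod_{a,b}\mathbf{E}[e^{\lambda(\cdots)}]$. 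Nor does Proposition~\ref{prop:brkgauss} apply off the shelf: that result bounds a supremum over a bracketed family of martingales, whereas your object is a sum of squared martingales divided by random normalizers; it is not itself a family of martingales. A workable repair would be to rewrite the (deterministically renormalized) chi-squared form by duality, $\sum_{a,b}M_i(a,b)^2/\mathbf{P}^\star(a,b)=\sup_{\|u\|_2\le 1}\bigl(\sum_{a,b}u_{a,b}M_i(a,b)/\sqrt{\mathbf{P}^\star(a,b)}\bigr)^2$, so that the quantity under the max over $i$ becomes the square of a supremum over a \emph{family} of martingales indexed by the unit ball, and then bound the bracketing entropy of that ball. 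But this essentially reconstructs the entropy/chaining machinery the paper deploys on $\Theta^r$ directly, and as written your proposal does not carry out that step.
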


Conceptually, this result can be understood as follows.  It is well known 
in classical statistics that, in ``regular'' cases, the likelihood ratio 
statistic
$$
	\sup_{\mathbf{P}\in\Theta^r}\log\mathbf{P}(x_{1:n})
	-\log\mathbf{P}^\star(x_{1:n})
$$
converges weakly as $n\to\infty$ to a $\chi^2$-distributed random 
variable.  Therefore, we expect the likelihood ratio statistic to possess 
exponential tails at least for large $n$.  Proposition 
\ref{prop:deviation} provides a precise nonasymptotic description of this 
phenomenon.

We now prove Theorem \ref{thm:main} presuming that Proposition 
\ref{prop:deviation} holds.

\begin{proof}[Proof of Theorem \ref{thm:main}]
We clearly need only consider sequences $x_{1:n}$ with 
$\mathbf{P}^\star(x_{1:n})>0$.  We begin with some straightforward 
estimates:
\begin{equation*}
\begin{split}
        &
        \sup_{r^\star<r<\kappa(n)}\frac{1}{|\A|^r}\left\{
        \sup_{\mathbf{P}\in\Theta^r}\log\mathbf{P}(x_{1:n})-
        \sup_{\mathbf{P}\in\Theta^{r^\star}}\log\mathbf{P}(x_{1:n})
        \right\} \\
        &\qquad\mbox{}\le
        \sup_{r^\star<r<\kappa(n)}\frac{1}{|\A|^r}\left\{
        \sup_{\mathbf{P}\in\Theta^r}\log\mathbf{P}(x_{1:n})-
        \log\mathbf{P}^\star(x_{1:n})
        \right\} \\
        &\qquad\mbox{}=
        \sup_{r^\star<r<\kappa(n)}\frac{1}{|\A|^r}\left\{
        \sup_{\mathbf{P}\in\Theta^r}\log\mathbf{P}(x_{1:n})-
        \log\mathbf{P}^\star(x_{1:n}|x_{1:r})-
        \log\mathbf{P}^\star(x_{1:r})
        \right\} \\
        &\qquad\mbox{}\le
        \sup_{r^\star<r<\kappa(n)}\frac{1}{|\A|^r}\left\{
        \sup_{\mathbf{P}\in\Theta^r}\log\mathbf{P}(x_{1:n})-
        \log\mathbf{P}^\star(x_{1:n}|x_{1:r})\right\}
        +C,
\end{split}
\end{equation*}
for a constant $C$ independent of $n$ and $x_{1:n}$.  Here we have used 
that for any irreducible (and time homogeneous) Markov chain 
$\mathbf{P}^\star$, there exists a constant $0<\lambda<1$ such 
that $\mathbf{P}^\star(x_{1:r})>\lambda^r$ whenever 
$\mathbf{P}^\star(x_{1:r})>0$, so that
$$
	\sup_{r>r^\star}\frac{-\log\mathbf{P}^\star(x_{1:r})}{|\A|^r}
	\le C :=
	\log(1/\lambda)\,
	\sup_{r>r^\star}\frac{r}{|\A|^r}<\infty.
$$
We conclude that it suffices to prove
$$
        \sup_{r^\star<r<\kappa(n)}\frac{1}{|\A|^r}\left\{
        \sup_{\mathbf{P}\in\Theta^r}\log\mathbf{P}(x_{1:n})-
        \log\mathbf{P}^\star(x_{1:n}|x_{1:r})\right\}\le C_0\log\log n
$$
eventually as $n\to\infty$ $\mathbf{P}^\star$-a.s.  Define for simplicity
$$
        \Delta_{i,r} = 
        \sup_{\mathbf{P}\in\Theta^r}\log\mathbf{P}(x_{1:i})-
        \log\mathbf{P}^\star(x_{1:i}|x_{1:r}).
$$
We can estimate
\begin{equation*}
\begin{split}
        &
        \mathbf{P}^\star\left[
        F_{2^{n}}\cap\max_{2^n\le i\le 2^{n+1}}
        \frac{1}{\log\log i}
        \sup_{r^\star<r<\kappa(i)}
        \frac{\Delta_{i,r}}{|\A|^r}\ge C_0\right] \\
        &\qquad\mbox{}\le
        \mathbf{P}^\star\left[
        F_{2^{n}}\cap\max_{2^n\le i\le 2^{n+1}}
        \sup_{r^\star<r<\kappa(2^{n+1})}
        \frac{\Delta_{i,r}}{|\A|^r}\ge C_0\log\log 2^n\right] \\
        &\qquad\mbox{}\le
        \sum_{r^\star<r<\kappa(2^{n+1})}
        \mathbf{P}^\star\left[
        F_{2^{n}}\cap\max_{2^n\le i\le 2^{n+1}}
        \Delta_{i,r}
        \ge C_0|\A|^r\log\log 2^n\right],
\end{split}
\end{equation*}
where we used that $\kappa(n)$ is increasing.  Now let $F_n$ be defined as 
in Proposition \ref{prop:deviation} for $\rho(n)=\kappa(2n)$.  Then there 
exist $C_1,C_1'$ such that for all $n$ sufficiently large,
$$
        \mathbf{P}^\star\left[
        F_{2^{n}}\cap\max_{2^n\le i\le 2^{n+1}}
        \Delta_{i,r}
        \ge C_0|\A|^r\log\log 2^n\right] \le
	C_1'e^{-C_0|\A|^r\log\log 2^n/C_1}
$$
for all $r^*<r<\kappa(2^{n+1})$.  Therefore
\begin{equation*}
\begin{split}
        &\mathbf{P}^\star\left[
        F_{2^{n}}\cap\max_{2^n\le i\le 2^{n+1}}
        \frac{1}{\log\log i}
        \sup_{r^\star<r<\kappa(i)}
        \frac{\Delta_{i,r}}{|\A|^r}\ge C_0\right] \\
	&\qquad\qquad\mbox{}\le
        C_1'\sum_{r^\star<r<\kappa(2^{n+1})}
	\left(e^{-C_0\log\log 2/C_1}
	n^{-C_0/C_1}\right)^{|\A|^r} \\
	&\qquad\qquad\mbox{}\le
	2C_1'e^{-C_0\log\log 2/C_1}n^{-C_0/C_1}
\end{split}
\end{equation*}
for $n$ sufficiently large.  Thus for any choice of $C_0>C_1$, we find 
that
$$
	\sum_{n=1}^\infty
        \mathbf{P}^\star\left[
        F_{2^{n}}\cap\max_{2^n\le i\le 2^{n+1}}
        \frac{1}{\log\log i}
        \sup_{r^\star<r<\kappa(i)}
        \frac{\Delta_{i,r}}{|\A|^r}\ge C_0
	\right] < \infty.
$$
By the Borel-Cantelli lemma,
$$
        F_{2^{n}}^c\cup
	\max_{2^n\le i\le 2^{n+1}}
        \frac{1}{\log\log i}
        \sup_{r^\star<r<\kappa(i)}
        \frac{\Delta_{i,r}}{|\A|^r}<C_0
	\quad\mbox{eventually as }n\to\infty\quad
	\mathbf{P}^\star\mbox{-a.s.}
$$
But by large-scale typicality with cutoff $\kappa(2n)$, we know that
$F_{2^n}$ must hold eventually as $n\to\infty$ $\mathbf{P}^\star$-a.s.
The result follows immediately.
\end{proof}

\begin{rem}
The proof of Theorem \ref{thm:main} shows that the large-scale typicality 
property is in fact only needed along an exponentially increasing 
subsequence of times $t_n=2^n$, so that the assumption of the Theorem can 
be weakened slightly.  However, the weaker assumption does not ultimately 
appear to lead to better results than the full large-scale typicality 
assumption (for example, note that the proof of large-scale typicality in
\cite{CS00} already utilizes such a subsequence).
\end{rem}

\begin{rem}
Theorem \ref{thm:main} could be improved by employing the blocking 
procedure along the subsequence $t_n=\gamma^n$ for arbitrary $\gamma>1$.
In this manner, one can establish that the result is still valid under the 
weaker assumption that the large-scale typicality property holds with 
cutoff $\kappa(\gamma n)$ for some $\gamma>1$.  However, this does not 
appear to lead to a substantially different conclusion for the order 
estimation problem.  In order to keep the notation and proofs as 
transparent as possible we have restricted our results to the case 
$\gamma=2$, but the necessary modifications for the case of arbitrary 
$\gamma>1$ are easily implemented.
\end{rem}

\section{Proof of Proposition \ref{prop:deviation}}
\label{sec:proof}

The longest part of the proof of Theorem \ref{thm:main} consists of the 
proof of Proposition \ref{prop:deviation}.  To establish this result, we 
adapt an approach using techniques from empirical process theory 
\cite{vdG00,Mas07} that was originally developed to obtain rates of 
convergence for nonparametric maximum likelihood estimators in the i.i.d.\ 
setting.  At the heart of the proof of Proposition \ref{prop:deviation} 
lies an extension of a maximal inequality for families of martingales 
under bracketing entropy conditions, due to van de Geer \cite{vdG00}, 
Theorem 8.13.  The extension of this result that is needed for our 
purposes is developed in the Appendix.

\subsection{Preliminary computations}

Any measure $\mathbf{P}\in\Theta^r$ is uniquely determined by its initial 
probability $\mathbf{P}(x_{1:r})$ and its transition probability 
$\mathbf{P}(x_{r+1}|x_{1:r})$.  It is easily seen that the measure which 
maximizes the log-likelihood $\log\mathbf{P}(x_{1:n})$ of 
$\mathbf{P}\in\Theta^r$ assigns unit probability to the observed initial 
path $x_{1:r}$.  Thus for $r>r^\star$
$$
        \sup_{\mathbf{P}\in\Theta^r}
        \log\mathbf{P}(x_{1:n})
        -\log\mathbf{P}^\star(x_{1:n}|x_{1:r}) =
        \sup_{\mathbf{P}\in\Theta^r}
        \sum_{i=r+1}^n
        \log\left(\frac{\mathbf{P}(x_i|x_{i-r:i-1})}{
        \mathbf{P}^\star(x_i|x_{i-r:i-1})}\right).
$$
The family of functions $\log(\mathbf{P}(x_i|x_{i-r:i-1})/ 
\mathbf{P}^\star(x_i|x_{i-r:i-1}))$ ($\mathbf{P}\in\Theta^r$) is 
$\mathbf{P}^\star$-a.s.\ uniformly bounded from above but not from 
below.  To avoid problems later on, we apply a standard trick.
For any $\mathbf{P}\in\Theta^r$, define
$$
        \mathbf{\tilde P}(x_i|x_{i-r:i-1}) =
        \frac{\mathbf{P}(x_i|x_{i-r:i-1})+
        \mathbf{P}^\star(x_i|x_{i-r:i-1})}{2}.
$$
Thus $\mathbf{\tilde P}$ is a Markov chain whose transition 
probabilities are an equal mixture of the transition probabilities
of $\mathbf{P}$ and $\mathbf{P}^\star$ (the initial probabilities
of $\mathbf{\tilde P}$ are irrelevant for our purposes and need not be 
defined).  By concavity of the logarithm, we find
$$
        \sup_{\mathbf{P}\in\Theta^r}
        \log\mathbf{P}(x_{1:n})
        -\log\mathbf{P}^\star(x_{1:n}|x_{1:r}) \le
        2\sup_{\mathbf{P}\in\Theta^r}
        \sum_{i=r+1}^n
        \log\left(\frac{\mathbf{\tilde P}(x_i|x_{i-r:i-1})}{
        \mathbf{P}^\star(x_i|x_{i-r:i-1})}\right).
$$
It therefore suffices to obtain a deviation bound for the right hand
side of this expression, whose summands are $\mathbf{P}^\star$-a.s.\ 
uniformly bounded above and below.

\subsection{Peeling}

The first part of the proof of Proposition \ref{prop:deviation} aims to 
reduce the problem to a deviation inequality for martingales.  To this end 
we employ a peeling device from the theory of weighted empirical 
processes.

Define the natural filtration $\mathcal{F}_n=\sigma\{X_1,\ldots,X_n\}$.
For any $\mathbf{P}\in\Theta^r$, we define
$$
        M_n^{\mathbf{P}} = 
        \sum_{i=r+1}^n
	\left\{
        \log\left(\frac{\mathbf{\tilde P}(x_i|x_{i-r:i-1})}{
        \mathbf{P}^\star(x_i|x_{i-r:i-1})}\right) 
	-
	\mathbf{E}^\star\left[\left.
        \log\left(\frac{\mathbf{\tilde P}(x_i|x_{i-r:i-1})}{
        \mathbf{P}^\star(x_i|x_{i-r:i-1})}\right) 
	\right|\mathcal{F}_{i-1}\right]
	\right\},
$$
which is a martingale (under $\mathbf{P}^\star$) by construction.
It is easily seen that
$$
	M_n^{\mathbf{P}} =
        \sum_{i=r+1}^n
        \log\left(\frac{\mathbf{\tilde P}(x_i|x_{i-r:i-1})}{
        \mathbf{P}^\star(x_i|x_{i-r:i-1})}\right) 
	+ D_n^{\mathbf{P}},
$$
where we have defined
$$
        D_n^{\mathbf{P}} = 
        -\sum_{i=r+1}^n
        \sum_{a_i\in\A}
        \mathbf{P}^\star(a_i|x_{i-r:i-1})
        \log\left(\frac{\mathbf{\tilde P}(a_i|x_{i-r:i-1})}{
        \mathbf{P}^\star(a_i|x_{i-r:i-1})}\right).
$$
We also define for any $\mathbf{P},\mathbf{P}'\in\Theta^r$
the quantity
$$
        H_n(\mathbf{P},\mathbf{P}') =
        \sum_{i=r+1}^n
        \sum_{a_i\in\A}
        \left(
        \mathbf{\tilde P}(a_i|x_{i-r:i-1})^{1/2}-
        \mathbf{\tilde P}'(a_i|x_{i-r:i-1})^{1/2}
        \right)^2. 
$$
Note that $\sqrt{H_n(\mathbf{P},\mathbf{P}')}$ defines a random distance
on $\Theta^r$.  As we will see below, the role of the set $F_n$ (and hence 
the large-scale typicality assumption) in the proof of Proposition 
\ref{prop:deviation} is that it allows us to control this random distance.

\begin{lem}
\label{lem:peeling}
For any $\varepsilon>0$, $n\ge 1$ and $r>r^\star$
\begin{multline*}
        \mathbf{P}^\star\left[
        F_n\cap\max_{i=n,\ldots,2n}
        \left\{\sup_{\mathbf{P}\in\Theta^r}\log\mathbf{P}(x_{1:i})-
        \log\mathbf{P}^\star(x_{1:i}|x_{1:r})\right\}\ge \varepsilon    
        \right]\\
	\mbox{}\le
        \sum_{k=0}^\infty
        \mathbf{P}^\star\left[
        F_n\cap
        \sup_{\mathbf{P}\in\Theta^r}
        \one_{H_n(\mathbf{P},\mathbf{P}^\star)\le 2^k\varepsilon}
        \max_{i=n,\ldots,2n}M_i^{\mathbf{P}}
        \ge 2^{k-1}\varepsilon
        \right].
\end{multline*}
\end{lem}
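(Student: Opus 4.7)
The plan is to execute the classical peeling decomposition from empirical process theory, stratifying the parameter space $\Theta^r$ by the random pseudo-distance $\sqrt{H_n(\cdot,\mathbf{P}^\star)}$ after first rewriting the log-likelihood ratio in terms of the martingale $M_i^{\mathbf{P}}$ and the nonnegative remainder $D_i^{\mathbf{P}}$. The mixture reduction already performed in the preceding subsection bounds the log-likelihood ratio, up to a factor of $2$, by $\sum_{j=r+1}^i\log(\mathbf{\tilde P}(x_j|x_{j-r:j-1})/\mathbf{P}^\star(x_j|x_{j-r:j-1}))$, which by the very definition of $M_i^{\mathbf{P}}$ equals $M_i^{\mathbf{P}}-D_i^{\mathbf{P}}$.

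The first ingredient to establish is the inequality $D_i^{\mathbf{P}}\ge H_n(\mathbf{P},\mathbf{P}^\star)$ whenever $i\ge n$. Since the $\tilde{\ }$ operation applied to $\mathbf{P}^\star$ returns $\mathbf{P}^\star$, the quantity $H_n(\mathbf{P},\mathbf{P}^\star)$ is simply the accumulated squared Hellinger distance between $\mathbf{\tilde P}(\cdot|x_{j-r:j-1})$ and $\mathbf{P}^\star(\cdot|x_{j-r:j-1})$, while $D_n^{\mathbf{P}}$ is the corresponding accumulated Kullback--Leibler divergence from $\mathbf{P}^\star$ to $\mathbf{\tilde P}$. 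The termwise bound $D(\mathbf{P}^\star\|\mathbf{\tilde P})\ge h^2(\mathbf{P}^\star,\mathbf{\tilde P})$ is an immediate consequence of the elementary inequality $\log y\le 2(\sqrt y-1)$; summing and using the obvious monotonicity $H_i(\mathbf{P},\mathbf{P}^\star)\ge H_n(\mathbf{P},\mathbf{P}^\star)$ for $i\ge n$ yields the claim.

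With this in hand the peeling is routine. On the event in the left-hand side of the lemma, by compactness of $\Theta^r$ and continuity of the log-likelihood one can select (measurably) some $\mathbf{P}^*\in\Theta^r$ and $i^*\in\{n,\ldots,2n\}$ with $M_{i^*}^{\mathbf{P}^*}-D_{i^*}^{\mathbf{P}^*}\ge\varepsilon/2$. Partition $\Theta^r$ into the shells $B_0=\{\mathbf{P}:H_n(\mathbf{P},\mathbf{P}^\star)\le\varepsilon\}$ and $B_k=\{\mathbf{P}:2^{k-1}\varepsilon<H_n(\mathbf{P},\mathbf{P}^\star)\le 2^k\varepsilon\}$ for $k\ge 1$, and let $k^*$ be such that $\mathbf{P}^*\in B_{k^*}$. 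If $k^*=0$, dropping $D_{i^*}^{\mathbf{P}^*}\ge 0$ gives $M_{i^*}^{\mathbf{P}^*}\ge\varepsilon/2=2^{-1}\varepsilon$; if $k^*\ge 1$, the key inequality above gives $D_{i^*}^{\mathbf{P}^*}\ge H_n(\mathbf{P}^*,\mathbf{P}^\star)>2^{k^*-1}\varepsilon$ and hence $M_{i^*}^{\mathbf{P}^*}\ge\varepsilon/2+2^{k^*-1}\varepsilon\ge 2^{k^*-1}\varepsilon$. Since $H_n(\mathbf{P}^*,\mathbf{P}^\star)\le 2^{k^*}\varepsilon$ in either case, the event inside the $k^*$-th summand of the right-hand side is realized, and a union bound over $k$ concludes.

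The only real content is the Kullback--Leibler/Hellinger inequality, which is precisely where the mixture transition probability $\mathbf{\tilde P}$ pays off: working directly with $\log(\mathbf{P}/\mathbf{P}^\star)$ would have left the summands unbounded below and would not have yielded any clean comparison between the ``entropy penalty'' $D$ and the pseudo-distance $H$. The event $F_n$ plays no role in the decomposition itself and is simply carried along on both sides; it will become essential only in the next step, where large-scale typicality is used to control the effective size of the shells $B_k$ when the maximal inequality for martingales from the Appendix is applied.
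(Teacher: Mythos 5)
Your proposal is correct and follows essentially the same route as the paper: the mixture trick to pass to $M_i^{\mathbf{P}} - D_i^{\mathbf{P}}$, the elementary bound $-\log x \ge 2 - 2\sqrt{x}$ to get $D_i^{\mathbf{P}} \ge H_i(\mathbf{P},\mathbf{P}^\star) \ge H_n(\mathbf{P},\mathbf{P}^\star)$ for $i\ge n$, and then the peeling of $\Theta^r$ into $H_n$-shells with a union bound. The only cosmetic difference is that you phrase the peeling via a measurable selection of a near-maximizer $(\mathbf{P}^*,i^*)$ and then case on its shell, whereas the paper simply inserts the shell indicators inside the supremum and applies the union bound directly, thereby avoiding any appeal to compactness or continuity of $\Theta^r$.
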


\begin{proof}
From the discussion above, it is clear that
\begin{equation*}
\begin{split}
        &\mathbf{P}^\star\left[
        F_n\cap\max_{i=n,\ldots,2n}
        \left\{\sup_{\mathbf{P}\in\Theta^r}\log\mathbf{P}(x_{1:i})-
        \log\mathbf{P}^\star(x_{1:i}|x_{1:r})\right\}\ge \varepsilon    
        \right] \\
        &\qquad\mbox{}\le
        \mathbf{P}^\star\left[
        F_n\cap\max_{i=n,\ldots,2n}
        \sup_{\mathbf{P}\in\Theta^r}
        \sum_{\ell=r+1}^i
        \log\left(\frac{\mathbf{\tilde P}(x_\ell|x_{\ell-r:\ell-1})}{
        \mathbf{P}^\star(x_\ell|x_{\ell-r:\ell-1})}\right)
        \ge \frac{\varepsilon}{2}
        \right] \\
        &\qquad\mbox{}=
        \mathbf{P}^\star\left[
        F_n\cap\max_{i=n,\ldots,2n}
        \sup_{\mathbf{P}\in\Theta^r}
        \left\{M_i^{\mathbf{P}}-D_i^{\mathbf{P}}\right\}
        \ge \frac{\varepsilon}{2}
        \right].
\end{split}
\end{equation*}
Now note that as $-\log x\ge 2-2\sqrt{x}$ for $x>0$, 
$$
        D_n^{\mathbf{P}}\ge
        2\sum_{i=r+1}^n
        \sum_{a_i\in\A}
        \mathbf{P}^\star(a_i|x_{i-r:i-1})
        \left(1-\frac{\mathbf{\tilde P}(a_i|x_{i-r:i-1})^{1/2}}{
        \mathbf{P}^\star(a_i|x_{i-r:i-1})^{1/2}}\right) = 
        H_n(\mathbf{P},\mathbf{P}^\star).
$$
Therefore, we can estimate
\begin{equation*}
\begin{split}
        &\mathbf{P}^\star\left[
        F_n\cap\max_{i=n,\ldots,2n}
        \left\{\sup_{\mathbf{P}\in\Theta^r}\log\mathbf{P}(x_{1:i})-
        \log\mathbf{P}^\star(x_{1:i}|x_{1:r})\right\}\ge \varepsilon    
        \right] \\
        &\qquad\mbox{}\le
        \mathbf{P}^\star\left[
        F_n\cap\max_{i=n,\ldots,2n}
        \sup_{\mathbf{P}\in\Theta^r}
        \left\{M_i^{\mathbf{P}}-H_i(\mathbf{P},\mathbf{P}^\star)\right\}
        \ge \frac{\varepsilon}{2}
        \right] \\
        &\qquad\mbox{}\le
        \mathbf{P}^\star\left[
        F_n\cap
        \sup_{\mathbf{P}\in\Theta^r}
        \left\{
        \max_{i=n,\ldots,2n}M_i^{\mathbf{P}}
        -H_n(\mathbf{P},\mathbf{P}^\star)\right\}
        \ge \frac{\varepsilon}{2}
        \right].
\end{split}
\end{equation*}
We now partition the space $\Theta^r$ into an inner ring
$\{\mathbf{P}\in\Theta^r:H_n(\mathbf{P},\mathbf{P}^\star)\le\varepsilon\}$
and a collection of concentric rings $\{\mathbf{P}\in\Theta^r:
2^{k-1}\varepsilon\le H_n(\mathbf{P},\mathbf{P}^\star)\le 
2^k\varepsilon\}$ (note that this is a random partition,
as the quantity $H_n(\mathbf{P},\mathbf{P}')$ depends on the 
observed path).  Applying the union bound gives the estimates
\begin{equation*}
\begin{split}
        &\mathbf{P}^\star\left[
        F_n\cap\max_{i=n,\ldots,2n}
        \left\{\sup_{\mathbf{P}\in\Theta^r}\log\mathbf{P}(x_{1:i})-
        \log\mathbf{P}^\star(x_{1:i}|x_{1:r})\right\}\ge \varepsilon    
        \right] \\
        &\qquad\mbox{}\le
        \mathbf{P}^\star\left[
        F_n\cap
        \sup_{\mathbf{P}\in\Theta^r}
        \left\{
        \max_{i=n,\ldots,2n}M_i^{\mathbf{P}}
        -H_n(\mathbf{P},\mathbf{P}^\star)\right\}
        \one_{H_n(\mathbf{P},\mathbf{P}^\star)\le\varepsilon}
        \ge \frac{\varepsilon}{2}
        \right] \\
        &\qquad\qquad\mbox{}+
        \sum_{k=1}^\infty
        \mathbf{P}^\star\Bigg[
        F_n\cap
        \sup_{\mathbf{P}\in\Theta^r}
        \Bigg\{
        \max_{i=n,\ldots,2n}M_i^{\mathbf{P}}
        -H_n(\mathbf{P},\mathbf{P}^\star)\Bigg\}
        \\
        &\hskip6cm \mbox{}
        \times\one_{2^{k-1}\varepsilon\le 
        H_n(\mathbf{P},\mathbf{P}^\star)\le 2^k\varepsilon}
        \ge \frac{\varepsilon}{2}
        \Bigg] \\
        &\qquad\mbox{}\le
        \sum_{k=0}^\infty
        \mathbf{P}^\star\left[
        F_n\cap
        \sup_{\mathbf{P}\in\Theta^r}
        \one_{H_n(\mathbf{P},\mathbf{P}^\star)\le 2^k\varepsilon}
        \max_{i=n,\ldots,2n}M_i^{\mathbf{P}}
        \ge 2^{k-1}\varepsilon
        \right].
\end{split}
\end{equation*}
The proof is complete.
\end{proof}

\subsection{Control of $H_n$}

Our next task is to control the quantity $H_n(\mathbf{P},\mathbf{P}')$.  
First, we show that on the event $F_n$ the quantity $H_n$ is comparable to
$$
        H(\mathbf{P},\mathbf{P}') =
        \sum_{a_{1:r+1}\in\A^{r+1}}
        \mathbf{P}^\star(a_{1:r})
        \left(
        \mathbf{\tilde P}(a_{r+1}|a_{1:r})^{1/2}-
        \mathbf{\tilde P}'(a_{r+1}|a_{1:r})^{1/2}
        \right)^2,
$$
which is a nonrandom squared distance on $\Theta^r$.

\begin{lem}
\label{lem:normcontrol}
There exist constants $C_3,C_4$ such that for any $n\ge 1$, we have
$$
	H_{2n}(\mathbf{P},\mathbf{P}') \le
	C_3\,H_{n}(\mathbf{P},\mathbf{P}')
$$
and
$$
	(n-r)\,C_4^{-1}H(\mathbf{P},\mathbf{P}')
	\le
	H_{n}(\mathbf{P},\mathbf{P}') 
	\le
	(n-r)\,C_4\,H(\mathbf{P},\mathbf{P}')
$$
for all $\mathbf{P},\mathbf{P}'\in\Theta^r$ and
$r^\star<r<\rho(n)$ on the event $F_n$.
\end{lem}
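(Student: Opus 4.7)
The plan is to reduce both inequalities to a single identity: since $H_n(\mathbf{P},\mathbf{P}')$ groups its summands by the value of the block $x_{i-r:i-1}$, it admits the ``empirical'' representation
$$
H_n(\mathbf{P},\mathbf{P}') = \sum_{a_{1:r}\in\A^r} N_n(a_{1:r})\,\psi(a_{1:r}),
$$
where $\psi(a_{1:r}) := \sum_{a_{r+1}\in\A}\bigl(\mathbf{\tilde P}(a_{r+1}|a_{1:r})^{1/2} - \mathbf{\tilde P}'(a_{r+1}|a_{1:r})^{1/2}\bigr)^2$ depends on $\mathbf{P},\mathbf{P}'$ but not on the observed path. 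The nonrandom target $H(\mathbf{P},\mathbf{P}')$ admits the parallel representation with $N_n(a_{1:r})$ replaced by $\mathbf{P}^\star(a_{1:r})$. Thus both claims reduce to uniform control of $N_n(a_{1:r})/(n-r)$ by $\mathbf{P}^\star(a_{1:r})$ against the nonnegative weight $\psi$.

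A preliminary bookkeeping step handles blocks $a_{1:r}$ with $\mathbf{P}^\star(a_{1:r})=0$: since $\mathbf{P}^\star$ is stationary, for any such $a_{1:r}$ and any index $i$ the event $\{X_{i-r:i-1}=a_{1:r}\}$ is $\mathbf{P}^\star$-null, and a union over the finitely many pairs $(i,a_{1:r})$ with $i\le 2n$ shows that, $\mathbf{P}^\star$-a.s., $N_n(a_{1:r})=N_{2n}(a_{1:r})=0$ whenever $\mathbf{P}^\star(a_{1:r})=0$. Hence both sums may be restricted, with probability one, to positive-probability blocks.

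For the second inequality I would invoke the inclusion $F_n\subset G_n$. On $G_n$, large-scale typicality at cutoff $\rho(n)$ gives, for every $a_{1:r}$ with $\mathbf{P}^\star(a_{1:r})>0$ and every $r<\rho(n)$,
$$
(1-\eta)\,\mathbf{P}^\star(a_{1:r})\ \le\ \frac{N_n(a_{1:r})}{n-r}\ \le\ (1+\eta)\,\mathbf{P}^\star(a_{1:r}).
$$
Multiplying by $(n-r)\,\psi(a_{1:r})\ge 0$ and summing over $a_{1:r}$ with $\mathbf{P}^\star(a_{1:r})>0$ yields $(1-\eta)(n-r)\,H\le H_n\le (1+\eta)(n-r)\,H$, which proves the second claim with $C_4 = 1/(1-\eta)$.

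For the first inequality I would apply the same sandwich at scale $2n$, using $F_n\subset G_{2n}$; the required condition $r<\rho(2n)$ is free from $r<\rho(n)$ by monotonicity of $\rho$. This gives $H_{2n}\le (1+\eta)(2n-r)\,H$, which combined with the lower bound $H_n\ge (1-\eta)(n-r)\,H$ gives $H_{2n}/H_n\le \tfrac{1+\eta}{1-\eta}\cdot\tfrac{2n-r}{n-r}$. The standing assumption $\rho(n)\le n/2$ forces $r<n/2$ and hence $(2n-r)/(n-r)<4$, so $C_3 = 4(1+\eta)/(1-\eta)$ suffices. The only real obstacle is the null-set bookkeeping that excludes blocks with $\mathbf{P}^\star(a_{1:r})=0$; once that is in place, the identification of $H_n$ as an $N_n$-weighted average of $\psi$ makes both inequalities immediate consequences of the typicality bound.
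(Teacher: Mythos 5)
Your proof is correct and essentially matches the paper's: both rewrite $H_n$ as the $N_n$-weighted sum $\sum_{a_{1:r}} N_n(a_{1:r})\psi(a_{1:r})$ and then invoke the large-scale typicality sandwich on both $G_n$ and $G_{2n}$, using that $\rho$ is increasing and that $r<\rho(n)\le n/2$ forces $(2n-r)/(n-r)<4$. The paper's version is slightly terser---it compares $N_{2n}(a_{1:r})$ to $N_n(a_{1:r})$ directly per block for the first inequality rather than routing through $H$ (which avoids any division by a possibly vanishing $H_n$), and it leaves the null-block bookkeeping implicit---but the underlying argument is the same.
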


\begin{proof}
It is easily seen that for any $n\ge 1$
$$
        H_n(\mathbf{P},\mathbf{P}') =
        \sum_{a_{1:r+1}\in\A^{r+1}}
        N_n(a_{1:r})
        \left(
        \mathbf{\tilde P}(a_{r+1}|a_{1:r})^{1/2}-
        \mathbf{\tilde P}'(a_{r+1}|a_{1:r})^{1/2}
        \right)^2.      
$$
On the event $F_n$, we have by construction
$$
	(1-\eta)\,\mathbf{P}^\star(a_{1:r})
	\le 
	\frac{N_n(a_{1:r})}{n-r}
	\le (1+\eta)\,\mathbf{P}^\star(a_{1:r})
$$
and
$$
	(1-\eta)\,\mathbf{P}^\star(a_{1:r})
	\le 
	\frac{N_{2n}(a_{1:r})}{2n-r}
	\le (1+\eta)\,\mathbf{P}^\star(a_{1:r})
$$
for all $a_{1:r}\in\A^r$ and $r<\rho(n)$. Here we have used that
$\rho(n)\le\rho(2n)$ as $\rho(n)$ is presumed to be increasing.
In particular, we have
$$
	N_{2n}(a_{1:r}) \le \frac{1+\eta}{1-\eta}\,\frac{2n-r}{n-r}\,
	N_n(a_{1:r}) \le 4\,\frac{1+\eta}{1-\eta}\,N_n(a_{1:r}),
$$
where we have used that $n-r>n/2$ as $r<\rho(n)<n/2$.
The result follows directly provided we choose $C_3,C_4$ (depending only
on $\eta$) sufficiently large.
\end{proof}

Next, we control the quantity $H_n(\mathbf{P},\mathbf{P}^\star)$ in terms 
of the ``Bernstein norm'' needed in order to apply the results developed 
in the Appendix.  As in the Appendix, we define the function 
$\phi(x)=e^x-x-1$.

\begin{lem}
\label{lem:bernsteincontrol}
Define for any $\mathbf{P}\in\Theta^r$, $r>r^\star$ and $n\ge 1$
$$
        R_n^{\mathbf{P}} = 
        8\sum_{i=r+1}^n
	\mathbf{E}^\star\left[\left.
	\phi\left(
	\frac{1}{2}
	\left|
        \log\left(\frac{\mathbf{\tilde P}(x_i|x_{i-r:i-1})}{
        \mathbf{P}^\star(x_i|x_{i-r:i-1})}\right) 
	\right|
	\right)
	~
	\right|\mathcal{F}_{i-1}\right].
$$
Then $R_n^{\mathbf{P}}\le 8H_n(\mathbf{P},\mathbf{P}^\star)$ for any
$\mathbf{P}\in\Theta^r$, $r>r^\star$ and $n\ge 1$.
\end{lem}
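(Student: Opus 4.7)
The plan is to reduce the lemma to a one-dimensional elementary inequality, since both $R_n^{\mathbf{P}}$ and $H_n(\mathbf{P},\mathbf{P}^\star)$ decompose as sums over time $i$ and over the alphabet. The key observation is that by construction $\mathbf{\tilde P}(a|x_{i-r:i-1})\ge \tfrac{1}{2}\mathbf{P}^\star(a|x_{i-r:i-1})$, so writing $u(a):=\mathbf{\tilde P}(a|x_{i-r:i-1})/\mathbf{P}^\star(a|x_{i-r:i-1})$ one has the pointwise lower bound $u(a)\ge 1/2$. This is what makes the log finite and, more importantly, restricts the range in which the elementary inequality must hold.

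Next, I would use the tower property and the Markov property to rewrite the conditional expectation appearing in $R_n^{\mathbf{P}}$ as a finite sum. Conditional on $\mathcal{F}_{i-1}$, the variable $x_i$ is distributed according to $\mathbf{P}^\star(\cdot|x_{i-r:i-1})$, so
\begin{equation*}
\mathbf{E}^\star\!\left[\left.\phi\!\left(\tfrac{1}{2}\bigl|\log u(x_i)\bigr|\right)\right|\mathcal{F}_{i-1}\right]
=\sum_{a\in\A}\mathbf{P}^\star(a|x_{i-r:i-1})\,\phi\!\left(\tfrac{1}{2}|\log u(a)|\right).
\end{equation*}
On the $H_n$ side, the corresponding summand is $\sum_{a\in\A}(\mathbf{\tilde P}(a|x_{i-r:i-1})^{1/2}-\mathbf{P}^\star(a|x_{i-r:i-1})^{1/2})^2 =\sum_a \mathbf{P}^\star(a|x_{i-r:i-1})(\sqrt{u(a)}-1)^2$. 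Thus it suffices to prove the one-variable inequality
\begin{equation*}
\phi\!\left(\tfrac{1}{2}|\log u|\right)\le (\sqrt{u}-1)^2\qquad\text{for all } u\ge \tfrac{1}{2},
\end{equation*}
since summing it over $a$ with weights $\mathbf{P}^\star(a|x_{i-r:i-1})$ and then over $i$ gives $\sum_i \mathbf{E}^\star[\phi(\ldots)|\mathcal{F}_{i-1}]\le H_n(\mathbf{P},\mathbf{P}^\star)$, and the factor of $8$ in the definition of $R_n^{\mathbf{P}}$ then delivers exactly the claimed bound.

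Finally, I would verify this scalar inequality by a calculus argument, substituting $v=\sqrt{u}\ge 1/\sqrt{2}$ to reduce it to $\phi(|\log v|)\le (v-1)^2$, and splitting into cases $v\ge 1$ and $1/\sqrt{2}\le v<1$. In each case the difference $(v-1)^2-\phi(|\log v|)$ vanishes at $v=1$ together with its first derivative, has positive second derivative there, and one checks positivity at the endpoint $v=1/\sqrt{2}$ (as well as monotonicity of the first derivative in each interval) to exclude sign changes. This case analysis is the only real work in the proof; it is routine but the interval $v<1$ requires a bit of care because one cannot simply take $v\to 0^+$ — the constraint $u\ge 1/2$ inherited from the mixture construction of $\mathbf{\tilde P}$ is essential, and exploiting it is really the reason for introducing $\mathbf{\tilde P}$ in the first place.
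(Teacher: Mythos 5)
Your proposal follows essentially the same route as the paper: both reduce, after writing the conditional expectation as the weighted sum over $a\in\A$ with weights $\mathbf{P}^\star(a\mid x_{i-r:i-1})$, to the scalar inequality $\phi(\tfrac12|\log u|)\le(\sqrt u-1)^2$ for $u\ge\tfrac12$ (equivalently $\phi(|x|)\le(e^x-1)^2$ for $x\ge-\tfrac12\log 2$), and the paper simply cites this last inequality as van de Geer's Lemma~7.1 rather than reproving it. One small slip in your calculus sketch: on $(1/\sqrt2,1)$ the first derivative of $v\mapsto(v-1)^2-\phi(-\log v)$ is not monotone as you assert, but it factors as $(v-1)(2v^2-1)/v^2$, which vanishes at both endpoints $v=1/\sqrt2$ and $v=1$ and is nonpositive in between, so the function is decreasing to its zero at $v=1$ and the desired nonnegativity follows anyway.
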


\begin{proof}
Note that $\log(\mathbf{\tilde P}(x_i|x_{i-r:i-1})/ 
\mathbf{P}^\star(x_i|x_{i-r:i-1}))\ge -\log(2)$.  By 
\cite{vdG00}, Lemma 7.1, we have $\phi(|x|)\le (e^x-1)^2$
for any $x\ge -\log(2)/2$.  Therefore
\begin{equation*}
\begin{split}
        R_n^{\mathbf{P}} &\le
        8\sum_{i=r+1}^n
	\mathbf{E}^\star\left[\left.
	\left(\frac{\mathbf{\tilde P}(x_i|x_{i-r:i-1})^{1/2}}{
        \mathbf{P}^\star(x_i|x_{i-r:i-1})^{1/2}}-1\right)^2
	\right|\mathcal{F}_{i-1}\right] \\
	& =
        8\sum_{i=r+1}^n
	\sum_{a_i\in\A}
	\mathbf{P}^\star(a_i|x_{i-r:i-1})
	\left(\frac{\mathbf{\tilde P}(a_i|x_{i-r:i-1})^{1/2}}{
        \mathbf{P}^\star(a_i|x_{i-r:i-1})^{1/2}}-1\right)^2.
\end{split}
\end{equation*}
The result follows immediately.
\end{proof}

Together with Lemma \ref{lem:peeling}, we obtain the following.

\begin{cor}
\label{cor:onion}
Define for any $\sigma>0$ the ball
$$
	\Theta^r(\sigma) = \left\{
	\mathbf{P}\in\Theta^r: H(\mathbf{P},\mathbf{P}^\star)\le
	\sigma\right\}.
$$
Then for any $\varepsilon>0$, $n\ge 1$ and $r^\star<r<\rho(n)$
\begin{multline*}
        \mathbf{P}^\star\left[
        F_n\cap\max_{i=n,\ldots,2n}
        \left\{\sup_{\mathbf{P}\in\Theta^r}\log\mathbf{P}(x_{1:i})-
        \log\mathbf{P}^\star(x_{1:i}|x_{1:r})\right\}\ge \varepsilon    
        \right]\\
	\mbox{}\le
        \sum_{k=0}^\infty
        \mathbf{P}^\star\left[
        F_n\cap
        \sup_{\mathbf{P}\in\Theta^r(C_42^k\varepsilon/(n-r))}
        \one_{R_{2n}^\mathbf{P}\le C_32^{k+3}\varepsilon}
        \max_{i\le 2n}M_i^{\mathbf{P}}
        \ge 2^{k-1}\varepsilon
        \right].
\end{multline*}
\end{cor}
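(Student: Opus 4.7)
The plan is to combine Lemma \ref{lem:peeling} with the two distance-comparison lemmas (\ref{lem:normcontrol} and \ref{lem:bernsteincontrol}) by replacing the random cutoffs in the $k$th peel with the nonrandom ones demanded by the statement. Concretely, I would start from the conclusion of Lemma \ref{lem:peeling}, which already gives
\begin{equation*}
\mathbf{P}^\star\!\left[F_n\cap\max_{i=n,\ldots,2n}\{\cdots\}\ge\varepsilon\right]
\le\sum_{k=0}^\infty
\mathbf{P}^\star\!\left[F_n\cap\sup_{\mathbf{P}\in\Theta^r}
\one_{H_n(\mathbf{P},\mathbf{P}^\star)\le 2^k\varepsilon}
\max_{i=n,\ldots,2n}M_i^{\mathbf{P}}\ge 2^{k-1}\varepsilon\right],
\end{equation*}
and then bound each summand by the corresponding summand on the right of the corollary.

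Fix $k$ and work on the event $F_n$ with some $\mathbf{P}\in\Theta^r$ satisfying $H_n(\mathbf{P},\mathbf{P}^\star)\le 2^k\varepsilon$. The first step is to translate this random constraint into membership in $\Theta^r(C_4 2^k\varepsilon/(n-r))$: by the lower bound in Lemma \ref{lem:normcontrol}, valid on $F_n$ because $r<\rho(n)$, we have $(n-r)C_4^{-1}H(\mathbf{P},\mathbf{P}^\star)\le H_n(\mathbf{P},\mathbf{P}^\star)\le 2^k\varepsilon$, so $\mathbf{P}\in\Theta^r(C_4 2^k\varepsilon/(n-r))$. The second step is to control $R_{2n}^{\mathbf{P}}$: by Lemma \ref{lem:bernsteincontrol}, $R_{2n}^{\mathbf{P}}\le 8H_{2n}(\mathbf{P},\mathbf{P}^\star)$, and by the doubling estimate in Lemma \ref{lem:normcontrol}, $H_{2n}(\mathbf{P},\mathbf{P}^\star)\le C_3 H_n(\mathbf{P},\mathbf{P}^\star)\le C_3 2^k\varepsilon$. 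Chaining these gives $R_{2n}^{\mathbf{P}}\le C_3 2^{k+3}\varepsilon$, which is precisely the indicator appearing on the right.

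Combining these two inclusions shows that, on $F_n$, the event
$\{\sup_{\mathbf{P}\in\Theta^r}\one_{H_n(\mathbf{P},\mathbf{P}^\star)\le 2^k\varepsilon}\max_{i=n,\ldots,2n}M_i^{\mathbf{P}}\ge 2^{k-1}\varepsilon\}$
is contained in
$\{\sup_{\mathbf{P}\in\Theta^r(C_4 2^k\varepsilon/(n-r))}\one_{R_{2n}^{\mathbf{P}}\le C_3 2^{k+3}\varepsilon}\max_{i\le 2n}M_i^{\mathbf{P}}\ge 2^{k-1}\varepsilon\}$, where the maximum is harmlessly enlarged from $\{n,\ldots,2n\}$ to $\{1,\ldots,2n\}$ (note $M_i^{\mathbf{P}}$ vanishes for $i\le r$, so this only increases the supremum). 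Summing over $k$ yields the claimed bound.

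There is no real obstacle here — the corollary is essentially an accounting exercise that packages the three previous lemmas in the exact form demanded by the Bernstein-type maximal inequality of the Appendix. The only point that needs care is making sure the two applications of Lemma \ref{lem:normcontrol} (the doubling estimate and the comparison with the deterministic $H$) are both valid on $F_n$ simultaneously, which they are because $F_n=G_n\cap G_{2n}$ and $\rho(n)\le\rho(2n)$.
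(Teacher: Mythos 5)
Your proof is correct and is exactly the argument the paper has in mind when it says ``the proof is straightforward and is therefore omitted'': start from Lemma \ref{lem:peeling}, then on $F_n$ use the lower bound in Lemma \ref{lem:normcontrol} to convert $H_n(\mathbf{P},\mathbf{P}^\star)\le 2^k\varepsilon$ into $\mathbf{P}\in\Theta^r(C_4 2^k\varepsilon/(n-r))$, and combine Lemma \ref{lem:bernsteincontrol} with the doubling bound in Lemma \ref{lem:normcontrol} to get $R_{2n}^{\mathbf{P}}\le 8C_3 2^k\varepsilon = C_3 2^{k+3}\varepsilon$. One small remark: your justification for replacing $\max_{i=n,\ldots,2n}$ by $\max_{i\le 2n}$ (that $M_i^{\mathbf{P}}$ vanishes for $i\le r$) is unnecessary — enlarging the index set can only increase the maximum, full stop.
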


The proof is straightforward and is therefore omitted.

\subsection{Control of the bracketing entropy}

We have now reduced the proof of Proposition \ref{prop:deviation} to the 
problem of estimating the summands in Corollary \ref{cor:onion}.  We aim 
to do this by applying Proposition \ref{prop:brkgauss} in the Appendix
with $\Theta\subseteq\Theta^r$,
$$
	\xi_i^\mathbf{P}=
	\left\{\begin{array}{ll}
	\log(\mathbf{\tilde P}(x_i|x_{i-r:i-1})/
	\mathbf{P}^\star(x_i|x_{i-r:i-1})) &\quad\mbox{for }i>r,\\
	0 &\quad\mbox{for }i\le r,
	\end{array}
	\right.
$$
and $K=2$.  To this end, the main remaining difficulty is to estimate
the bracketing entropy of Definition \ref{defn:brackets}.  This is our 
next order of business.

\begin{lem}
\label{lem:entropy}
Given $c>0$, there exists $C_5>0$ depending only on $c$ such that
$$
	\log\mathcal{N}(2n,\Theta^r(\sigma),F_n,2,\delta) \le
	|\A|^{r+1}\log\left(
	\frac{C_5\sqrt{(2n-r)\,\sigma}}{\delta}
	\right)
$$
for all $n\ge 1$, $r^\star<r<\rho(n)$, $\sigma>0$ and 
$0<\delta\le c\sqrt{(2n-r)\,\sigma}$.
\end{lem}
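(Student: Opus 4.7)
The plan is to construct a bracketing explicitly by gridding the $|\A|^{r+1}$-dimensional parameter space of transition probabilities. The key reduction is that on the event $F_n$, Lemma \ref{lem:normcontrol} equates the random distance $\sqrt{H_{2n}}$ with a $\sqrt{2n-r}$ multiple of the nonrandom Hellinger-type distance $\sqrt{H}$; combined with Lemma \ref{lem:bernsteincontrol} and the fact that the differences $\log(\mathbf{\tilde P}/\mathbf{P}^\star)$ are bounded on $F_n$ (since $\mathbf{\tilde P}\ge\mathbf{P}^\star/2$), this means the Bernstein norm of a bracket $\log(\mathbf{\tilde P}^U/\mathbf{\tilde P}^L)$ is controlled by $C(2n-r)\,H(\mathbf{P}^U,\mathbf{P}^L)$. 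It therefore suffices to cover $\Theta^r(\sigma)$ by brackets of nonrandom $H$-width at most $\sigma_0 := \delta^2/(C'(2n-r))$.

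I would parametrize $\Theta^r$ by the coordinates $q(a|b_{1:r}) := \sqrt{\mathbf{\tilde P}(a|b_{1:r})}$, identifying the transition parameters with a point in $\mathbb{R}^{|\A|^{r+1}}$. Under this parametrization $H(\mathbf{P},\mathbf{P}')$ becomes the $\mathbf{P}^\star(b_{1:r})$-weighted squared Euclidean distance and $\Theta^r(\sigma)$ is contained in a weighted $\ell^2$-ball of radius $\sqrt{\sigma}$. A standard volume argument yields a cover of cardinality at most $(C\sqrt{\sigma}/\sqrt{\sigma_0})^{|\A|^{r+1}}$ by weighted balls of radius $\sqrt{\sigma_0}$; substituting the above choice of $\sigma_0$ gives the claimed $|\A|^{r+1}\log(C_5\sqrt{(2n-r)\sigma}/\delta)$. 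To convert each cover center $\mathbf{P}_j$ into an honest bracket, one perturbs each coordinate of $\mathbf{\tilde P}_j(a|b_{1:r})$ by an amount calibrated so that every $\mathbf{P}$ within $H$-distance $\sqrt{\sigma_0}$ of $\mathbf{P}_j$ is sandwiched pointwise by $[\mathbf{\tilde P}^L_j,\mathbf{\tilde P}^U_j]$, which in turn yields a pointwise bracket on $\xi^\mathbf{P}=\log(\mathbf{\tilde P}/\mathbf{P}^\star)$.

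The main obstacle is verifying that the resulting bracket indeed has Bernstein width at most $\delta$. Concretely, one must show that the logarithm $\log(\mathbf{\tilde P}^U_j/\mathbf{\tilde P}^L_j)$ remains controlled uniformly across contexts of very different $\mathbf{P}^\star(b_{1:r})$-weights; using $\mathbf{\tilde P}\ge\mathbf{P}^\star/2$ together with the identity $N_{2n}(b_{1:r})\le 2(2n-r)\mathbf{P}^\star(b_{1:r})$ available on $F_n$, one shows that the Bernstein norm squared is dominated (up to constants) by $(2n-r)\,H(\mathbf{P}^U_j,\mathbf{P}^L_j)$. The hypothesis $\delta\le c\sqrt{(2n-r)\sigma}$ enters precisely here: it guarantees that the perturbation scale $\sqrt{\sigma_0/\mathbf{P}^\star(b_{1:r})}$ stays small enough relative to $\mathbf{P}^\star$ that the lower endpoint $\mathbf{\tilde P}^L_j$ remains bounded below by a positive multiple of $\mathbf{P}^\star$, so the logarithm-to-Hellinger equivalence persists for the brackets themselves and the whole construction closes.
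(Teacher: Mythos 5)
Your overall strategy --- reduce the random Bernstein norm to the nonrandom Hellinger-type distance $H$ via Lemmas \ref{lem:normcontrol} and \ref{lem:bernsteincontrol}, then count --- is the right one, and it is essentially what the paper does. But there is a genuine quantitative gap in the counting step that makes your argument fall short of the claimed bound.

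The problem is that you cover $\Theta^r(\sigma)$ by \emph{balls} of $H$-radius $\sqrt{\sigma_0}$ and then inflate each ball into a coordinate-wise bracket. Because $H$ is a weighted $\ell^2$-norm in $d := |\A|^{r+1}$ coordinates, a single $\mathbf{P}$ at $H$-distance $\sqrt{\sigma_0}$ from the center $\mathbf{P}_j$ can concentrate all of its deviation in one coordinate $b_{1:r}$, forcing the bracket to allow that coordinate to move by $\pm\sqrt{\sigma_0/\mathbf{P}^\star(b_{1:r})}$. The resulting bracket pair $(\mathbf{\tilde P}^L_j,\mathbf{\tilde P}^U_j)$ therefore has
$$
H(\mathbf{P}^U_j,\mathbf{P}^L_j)\;\approx\;\sum_{a,b}\mathbf{P}^\star(b)\,\Bigl(2\sqrt{\sigma_0/\mathbf{P}^\star(b)}\Bigr)^2\;=\;4\,|\A|^{r+1}\,\sigma_0,
$$
not $\sigma_0$. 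To keep the Bernstein width $\lesssim(2n-r)\,H(\mathbf{P}^U_j,\mathbf{P}^L_j)$ below $\delta^2$, you therefore need $\sigma_0\lesssim\delta^2/\bigl((2n-r)|\A|^{r+1}\bigr)$, a factor $|\A|^{r+1}$ smaller than the $\sigma_0=\delta^2/(C'(2n-r))$ you declare. With the corrected $\sigma_0$, the volumetric ball-covering bound gives $N\lesssim(3\sqrt{\sigma/\sigma_0})^{d}$, whose logarithm is $|\A|^{r+1}\log\bigl(C\sqrt{(2n-r)\sigma}/\delta\bigr)+\tfrac{r+1}{2}|\A|^{r+1}\log|\A|$. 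The extra additive term $\tfrac{r+1}{2}|\A|^{r+1}\log|\A|$ grows with $r$ and does \emph{not} appear in the statement, so your construction does not establish the lemma; a routine sphere-packing argument cannot remove it, because the $\vol(B(0,1))$ factors cancel between numerator and denominator in that argument.

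What saves the paper is a different device: it covers $\iota[\Theta^r(\sigma)]$ directly by \emph{axis-aligned grid cubes} of mesh $\beta$ (in coordinates where $H$ is the unweighted Euclidean norm), so that each cube \emph{is} a coordinate-wise bracket with no inflation step, and then counts cubes by comparing $\vol(B(x_0,4\sqrt\sigma+\sqrt d\,\beta))$ to $\beta^d$. Here the smallness of the Euclidean unit-ball volume in high dimension, $\vol(B(0,1))\le(\sqrt{2\pi e}/\sqrt d)^d$, exactly cancels the $\sqrt d=|\A|^{(r+1)/2}$ appearing in the mesh $\beta=\delta/\sqrt{(2n-r)\,d}$, producing the clean bound $|\A|^{r+1}\log(C_5\sqrt{(2n-r)\sigma}/\delta)$. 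A ball-by-ball cover cannot access this cancellation. Separately, you misattribute the role of the hypothesis $\delta\le c\sqrt{(2n-r)\sigma}$: the lower endpoint $\mathbf{\tilde P}^L_j$ is automatically bounded below by $\mathbf{P}^\star/2$ because the brackets are built from the mixed kernels $\tilde\lambda=(\lambda+\mathbf{P}^\star)/2$; the hypothesis is needed only at the very end, to absorb the additive $+|\A|^{(r+1)/2}\beta$ term in the volume comparison into the leading $\sqrt{(2n-r)\sigma}/\delta$ term.
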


\begin{proof}
Fix $n\ge 1$, $r^\star<r<\rho(n)$, $\sigma>0$ and
$0<\delta\le c\sqrt{(2n-r)\,\sigma}$ throughout the proof.
We begin by defining the family of functions
$$
	\mathbf{T}_\beta=
	\{p:\A^{r+1}\to\mathbb{R}_+
	~:~
	\mathbf{P}^\star(a_{1:r})^{1/2}
	p(a_{1:r+1})^{1/2}\in \beta\mathbb{Z}_+~~\forall\,
	a_{1:r+1}\in\A^{r+1}\},
$$
where $\beta>0$ is to be determined in due course.   We claim 
that for any $\mathbf{P}\in\Theta^r$, there exist 
$\lambda^{\mathbf{P}},\gamma^{\mathbf{P}}\in\mathbf{T}_\beta$ such 
that for all $a_{1:r+1}\in\A^{r+1}$ with $\mathbf{P}^\star(a_{1:r})>0$
$$
	\lambda^{\mathbf{P}}(a_{1:r+1})\le
	\mathbf{P}(a_{r+1}|a_{1:r})\le 
	\gamma^{\mathbf{P}}(a_{1:r+1})
$$ 
and 
$$
	\gamma^{\mathbf{P}}(a_{1:r+1})^{1/2}-
	\lambda^{\mathbf{P}}(a_{1:r+1})^{1/2}
	\le\frac{\beta}{
	\mathbf{P}^\star(a_{1:r})^{1/2}}.
$$ 
Indeed, this follows immediately by setting 
\begin{equation*}
\begin{split}
	\lambda^{\mathbf{P}}(a_{1:r+1}) &= \left(
	\frac{\lfloor \beta^{-1}\mathbf{P}^\star(a_{1:r})^{1/2}
	\mathbf{P}(a_{r+1}|a_{1:r})^{1/2}\rfloor}{\beta^{-1}
	\mathbf{P}^\star(a_{1:r})^{1/2}}\right)^2,\\
	\gamma^{\mathbf{P}}(a_{1:r+1})&=\left(\frac{\lceil 
	\beta^{-1}\mathbf{P}^\star(a_{1:r})^{1/2}
	\mathbf{P}(a_{r+1}|a_{1:r})^{1/2}\rceil}{\beta^{-1}
	\mathbf{P}^\star(a_{1:r})^{1/2}}
	\right)^2
\end{split}
\end{equation*}
for all $a_{1:r+1}\in\A^{r+1}$ with $\mathbf{P}^\star(a_{1:r})>0$.
Therefore $\mathbf{P}^\star$-a.s.
$$
	\Lambda_i^{\mathbf{P}} :=
	\log\left(\frac{\tilde\lambda^{\mathbf{P}}(x_i|x_{i-r:i-1})}{
	\mathbf{P}^\star(x_i|x_{i-r:i-1})}\right)
	\le\xi_i^\mathbf{P}\le
	\log\left(\frac{\tilde\gamma^{\mathbf{P}}(x_i|x_{i-r:i-1})}{
	\mathbf{P}^\star(x_i|x_{i-r:i-1})}\right)
	:=\Upsilon^{\mathbf{P}}_i
$$
for all $\mathbf{P}\in\Theta^r$, $i>r$ (we set $\Lambda_i^{\mathbf{P}}=
\Upsilon^{\mathbf{P}}_i=0$ for $i\le r$), where we have defined
$\tilde\gamma^{\mathbf{P}}(x_i|x_{i-r:i-1})=
\{\gamma^{\mathbf{P}}(x_{i-r:i})+
\mathbf{P}^\star(x_i|x_{i-r:i-1})\}/2$ and
$\tilde\lambda^{\mathbf{P}}(x_i|x_{i-r:i-1})=
\{\lambda^{\mathbf{P}}(x_{i-r:i})+
\mathbf{P}^\star(x_i|x_{i-r:i-1})\}/2$.
Moreover, we can estimate
\begin{equation*}
\begin{split}
        &8\sum_{i=1}^{2n}
        \mathbf{E}\left[\left.\phi\left(\frac{\Upsilon_i^{\mathbf{P}}-
                \Lambda_i^{\mathbf{P}}}{2}
        \right)\right|\mathcal{F}_{i-1}\right] \le 
        4\sum_{i=1}^{2n}
        \mathbf{E}\left[\left.\left(\frac{ 
	\tilde\gamma^{\mathbf{P}}(x_i|x_{i-r:i-1})^{1/2}}{
	\tilde\lambda^{\mathbf{P}}(x_i|x_{i-r:i-1})^{1/2}}
	-1\right)^2\right|\mathcal{F}_{i-1}\right] \\
	&\qquad\quad\le
        8\sum_{i=r+1}^{2n}
	\sum_{a_i\in\A}
	\left(
	\tilde\gamma^{\mathbf{P}}(a_i|x_{i-r:i-1})^{1/2}-
	\tilde\lambda^{\mathbf{P}}(a_i|x_{i-r:i-1})^{1/2}
	\right)^2 \\
	&\qquad\quad\le
        4\sum_{a_{1:r+1}\in\A^{r+1}}
	N_{2n}(a_{1:r})
	\left(
	\gamma^{\mathbf{P}}(a_{1:r+1})^{1/2}-
	\lambda^{\mathbf{P}}(a_{1:r+1})^{1/2}
	\right)^2 \\
	&\qquad\quad\le
        4\beta^2\sum_{a_{1:r+1}\in\A^{r+1}}
	\frac{N_{2n}(a_{1:r})}{\mathbf{P}^\star(a_{1:r})},
\end{split}
\end{equation*}
where we have used that $\phi(x)\le (e^x-1)^2/2$ for $x\ge 0$ and
\cite{vdG00}, Lemma 4.2.  As in the proof of Lemma \ref{lem:normcontrol}, 
we find that for any $\mathbf{P}\in\Theta^r$
$$
        8\sum_{i=1}^{2n}
        \mathbf{E}\left[\left.\phi\left(\frac{\Upsilon_i^{\mathbf{P}}-
                \Lambda_i^{\mathbf{P}}}{2}
        \right)\right|\mathcal{F}_{i-1}\right] \le
        4C_4(2n-r)|\A|^{r+1}\beta^2
$$
on the event $F_n$ (as $r<\rho(n)$ by assumption).  Therefore, if we 
choose
$$
	\beta = \frac{\delta}{\sqrt{4C_4(2n-r)|\A|^{r+1}}},
$$
then $\{(\Lambda^{\mathbf{P}}_i,\Upsilon^{\mathbf{P}}_i)_{1\le 
i\le 2n}\}_{\mathbf{P}\in\Theta^r(\sigma)}$ is a
$(2n,\Theta^r(\sigma),F_n,2,\delta)$-bracketing set.  To complete 
the proof we must estimate the cardinality of this set.

We approach this problem through a well known geometric device. We can 
represent any function from $\A^{r+1}$ to $\mathbb{R}$ as a vector in 
$\mathbb{R}^{|\A|^{r+1}}$ in the obvious fashion.  In particular, for any 
$p:\A^{r+1}\to\mathbb{R}$, denote by $\iota[p]$ the representative in 
$\mathbb{R}^{|\A|^{r+1}}$ of the function $\tilde p(a_{1:r+1})=
\mathbf{P}^\star(a_{1:r})^{1/2}p(a_{1:r+1})^{1/2}$.  Then
by \cite{vdG00}, Lemma 4.2
$$
	\iota[\Theta^r(\sigma)] \subseteq 
	B(x_0,4\sqrt{\sigma})\cap\mathbb{R}^{|\A|^{r+1}}_{++},
	\qquad\quad
	x_0=\iota[\mathbf{P}^\star(a_{r+1}|a_{1:r})],
$$
where $B(x,h)$ denotes the Euclidean ball in $\mathbb{R}^{|\A|^{r+1}}$
with center $x$ and radius $h$.  On the other hand, we clearly have
$\iota[\mathbf{T}_\beta]=(\beta\mathbb{Z}_+)^{|\A|^{r+1}}\subset
\mathbb{R}^{|\A|^{r+1}}$.  Define for any $x,x'\in\mathbb{R}^{|\A|^{r+1}}$
with $x'\succ x$ the cube $[x,x']:=\{\tilde x\in\mathbb{R}^{|\A|^{r+1}}:
x\preceq \tilde x\preceq x'\}$.  Let
$$
	\Xi_\beta :=
	\{x\in(\beta\mathbb{Z}_+)^{|\A|^{r+1}}:
	[x,x+\beta\one]\cap B(x_0,4\sqrt{\sigma})\ne
	\varnothing\},
$$
where $\one\in\mathbb{R}^{|\A|^{r+1}}$ denotes the vector all of whose 
entries are one.  Then clearly
$$
	\iota[\Theta^r(\sigma)]\subseteq
	B(x_0,4\sqrt{\sigma})\cap\mathbb{R}^{|\A|^{r+1}}_{++}
	\subseteq \bigcup_{x\in\Xi_\beta}
	[x,x+\beta\one],
$$
and, in particular, it is easily established from our previous 
computations that $\mathcal{N}(2n,\Theta^r(\sigma),F_n,2,\delta) \le
|\Xi_\beta|$.  Now suppose that $x'\in[x,x+\beta\one]$ for some
$x\in\Xi_\beta$.  Then there is an $x''\in[x,x+\beta\one]$ such that
$x''\in B(x_0,4\sqrt{\sigma})$.  In particular, we have
$\|x'-B(x_0,4\sqrt{\sigma})\|_\infty\le\beta$, and therefore
$\|x'-B(x_0,4\sqrt{\sigma})\|_2\le|\A|^{(r+1)/2}\beta$, for every
$x'\in[x,x+\beta\one]$, $x\in\Xi_\beta$.  We conclude that
$$
	\bigcup_{x\in\Xi_\beta}[x,x+\beta\one]\subseteq
	B(x_0,4\sqrt{\sigma}+|\A|^{(r+1)/2}\beta).
$$
Therefore, we can estimate
\begin{multline*}
	|\Xi_\beta|\,\beta^{|\A|^{r+1}} =
	\vol\left(
	\bigcup_{x\in\Xi_\beta}[x,x+\beta\one]
	\right) \le
	\vol\left(
	B(x_0,4\sqrt{\sigma}+|\A|^{(r+1)/2}\beta)
	\right)
	\\ \mbox{} 
	=
	(4\sqrt{\sigma}+|\A|^{(r+1)/2}\beta)^{|\A|^{r+1}}
	\vol(B(0,1)).
\end{multline*}
But from \cite{Mas07}, p.\ 249 we have the estimate
$$
	\vol(B(0,1)) \le 
	\left(\frac{\sqrt{2\pi e}}{|\A|^{(r+1)/2}}\right)^{|\A|^{r+1}}.
$$
Substituting the expression for $\beta$ and rearranging, we find that
$$
	|\Xi_\beta|
	\le
	\left(\frac{
	\{(8\sqrt{C_4}+c)\sqrt{2\pi e}\}
	\sqrt{(2n-r)\sigma}}{\delta}
	\right)^{|\A|^{r+1}},
$$
where we have used that $\delta\le c\sqrt{(2n-r)\sigma}$.
The proof is easily completed.
\end{proof}

\subsection{End of the proof}

To complete the proof of Proposition \ref{prop:deviation}, it remains to 
put together the results obtained above with Proposition 
\ref{prop:brkgauss} in the Appendix.

\begin{proof}[Proof of Proposition \ref{prop:deviation}]
In the following, we will always apply Lemma \ref{lem:entropy} 
and Proposition \ref{prop:brkgauss} with the same constants $c,c_0,c_1>0$.  
The appropriate values of these constants will be determined below.  We 
will also fix $n\ge 1$, $r^\star<r<\rho(n)$ and $\varepsilon\ge 
C_2|\A|^r$, with the constant $C_2$ to be determined.

To apply Corollary \ref{cor:onion}, we invoke
Proposition \ref{prop:brkgauss} with $K=2$, $\alpha = 
2^{k-1}\varepsilon$, and $R = C_32^{k+3}\varepsilon$ (fixing $k\ge 0$ for 
the time being).  We find that
\begin{multline*}
        \mathbf{P}^\star\left[
        F_n\cap
        \sup_{\mathbf{P}\in\Theta^r(C_42^k\varepsilon/(n-r))}
        \one_{R_{2n}^\mathbf{P}\le C_32^{k+3}\varepsilon}
        \max_{i\le 2n}M_i^{\mathbf{P}}
        \ge 2^{k-1}\varepsilon
        \right] \\ \mbox{}
	\le 2\,\exp\left[
	-\frac{2^{k-5}\varepsilon}{C_3C^2(c_1+1)}
	\right],
\end{multline*}
provided that $c_0^2\ge C^2(c_1+1)$ and
$$
        c_0\int_0^{\sqrt{C_32^{k+3}\varepsilon}}
	\sqrt{\log 
	\mathcal{N}(2n,\Theta^r(\tfrac{C_42^k\varepsilon}{n-r}),F_n,2,u)}\,du
        \le 2^{k-1}\varepsilon \le c_1C_32^{k+2}\varepsilon.
$$
To ensure that the second inequality holds, it suffices to choose
$c_1 = (8C_3)^{-1}$, and the condition on $c_0$ is satisfied by choosing
$c_0=C\sqrt{(8C_3)^{-1}+1}$.  To simplify the first inequality, choose
$c=\sqrt{8C_3/C_4}$.  Then the variable $u$ in the integral satisfies
$$
	u\le\sqrt{C_32^{k+3}\varepsilon}\le 
	c\sqrt{(2n-r)C_42^k\varepsilon/(n-r)},
$$
so by Lemma \ref{lem:entropy} it suffices to ensure that
$$
	2^{k-1}\varepsilon \ge
        |\A|^{(r+1)/2}
	C\sqrt{(8C_3)^{-1}+1}
	\int_0^{\sqrt{C_32^{k+3}\varepsilon}}
	\sqrt{\log\left(
	\frac{(4C_4)^{1/2}C_5\sqrt{2^k\varepsilon}}{u}
	\right)
	}\,du,
$$
where we have used that $r<\rho(n)\le n/2$ implies $(2n-r)/(n-r)\le 4$.
Defining 
$$
	C_6 :=
	\int_0^{\sqrt{8C_3}}
	\sqrt{\log\left(
	\frac{(4C_4)^{1/2}C_5}{v}
	\right)
	}\,dv<\infty,
$$
a simple change of variables shows that the above inequality is equivalent
to
$$
	2^{k-1}\varepsilon \ge
        |\A|^{(r+1)/2}C_6C\sqrt{(8C_3)^{-1}+1}~\sqrt{2^k\varepsilon},
$$
or, equivalently,
$$
	2^{k}\varepsilon \ge
        4C_6^2C^2((8C_3)^{-1}+1)|\A|^{r+1}.
$$
But this is always satisfied if we choose 
$C_2=4C_6^2C^2((8C_3)^{-1}+1)|\A|$.

With these choices of $c,c_0,c_1,C_2$, we have thus
shown that by Corollary \ref{cor:onion}
\begin{multline*}
        \mathbf{P}^\star\left[
        F_n\cap\max_{i=n,\ldots,2n}
        \left\{\sup_{\mathbf{P}\in\Theta^r}\log\mathbf{P}(x_{1:i})-
        \log\mathbf{P}^\star(x_{1:i}|x_{1:r})\right\}\ge \varepsilon    
        \right]\\
	\mbox{}\le
        2\sum_{k=0}^\infty
	\exp\left[
	-\frac{2^{k}\varepsilon}{2^5C^2(C_3+1/8)}
	\right]
	\le
	C_1'\,
	\exp\left[
	-\frac{\varepsilon}{C_1}
	\right]
\end{multline*}
with
$$
	C_1 = 2^5C^2(C_3+1/8),\qquad\quad
	C_1' = \frac{2}{1-e^{-C_2/2^5C^2(C_3+1/8)}},
$$
where we have used $\varepsilon\ge C_2$.  This completes the proof.
\end{proof}

\appendix

\section{A maximal inequality for martingales}

The purpose of this Appendix is to obtain a deviation bound on the 
supremum of an uncountable family of martingales, extending a result of
van de Geer \cite{vdG00}.

We work on a filtered probability space 
$(\Omega,\mathcal{F},\{\mathcal{F}_i\}_{i\ge 0},\mathbf{P})$.  We are 
given a parameter set $\Theta$ and a collection $(\xi_i^\theta)_{i\ge 1}$, 
$\theta\in\Theta$ of random variables such that $\xi_i^\theta$ is 
$\mathcal{F}_i$-measurable for all $i,\theta$.  This setting will be 
presumed throughout the Appendix.  In the following we will frequently use 
the function $\phi(x)=e^x-x-1$.

\begin{defn}
\label{defn:brackets}
Let $n\in\mathbb{N}$, $F\in\mathcal{F}$, $K>0$ and $\delta>0$ be given.  A 
finite collection $\{(\Lambda_i^j,\Upsilon_i^j)_{1\le i\le 
n}\}_{j=1,\ldots,N}$ of random variables is called a 
\emph{$(n,\Theta,F,K,\delta)$-bracketing set} if 
$\Lambda_i^j,\Upsilon_i^j$ are $\mathcal{F}_i$-measurable for all $i,j$, 
and for every $\theta\in\Theta$, there is a $1\le j\le N$ (the map 
$\theta\mapsto j$ is nonrandom) such that $\mathbf{P}$-a.s.
$$
        \Lambda_i^j\le\xi_i^\theta\le\Upsilon_i^j\quad
        \mbox{for all }i=1,\ldots,n
$$
and such that
$$
        2K^2\sum_{i=1}^n
        \mathbf{E}\left[\left.\phi\left(\frac{|\Upsilon_i^j-
                \Lambda_i^j|}{K}
        \right)\right|\mathcal{F}_{i-1}\right] \le \delta^2
        \quad\mbox{on }F.
$$
We denote as $\mathcal{N}(n,\Theta,F,K,\delta)$ the cardinality $N$ of the 
smallest $(n,\Theta,F,K,\delta)$-bracketing set 
($\log\mathcal{N}(n,\Theta,F,K,\delta)$ is called the
\emph{bracketing entropy}).
\end{defn}

The following extends a result of van de Geer \cite{vdG00}, Theorem 8.13.

\begin{prop}
\label{prop:brkgauss}
Fix $K>0$, and define for all $i\ge 0$
$$
        M_i^\theta = \sum_{\ell=1}^i\{\xi_\ell^\theta-
        \mathbf{E}[\xi_\ell^\theta|\mathcal{F}_{\ell-1}]\},\qquad\quad
        R_i^\theta = 2K^2\sum_{\ell=1}^i
        \mathbf{E}\left[\left.\phi\left(\frac{|\xi_\ell^\theta|}{K}
        \right)\right|\mathcal{F}_{\ell-1}\right].
$$
There is a universal constant $C>0$ such that
for any $n\in\mathbb{N}$, $R<\infty$ and $F\in\mathcal{F}$
$$
        \mathbf{P}\left[
        F\cap
        \sup_{\theta\in\Theta}
        \one_{R_n^\theta\le R}
        \max_{i\le n}M_i^\theta \ge \alpha
        \right] 
        \le 2\,\exp\left[-\frac{\alpha^2}{C^2(c_1+1)R}\right]
$$
for any $\alpha,c_0,c_1>0$ such that $c_0^2\ge C^2(c_1+1)$ and
$$
        c_0\int_0^{\sqrt{R}}\sqrt{\log \mathcal{N}(n,\Theta,F,K,u)}\,du
        \le \alpha \le \frac{c_1R}{K}.
$$
\emph{[}For example, the choice $C=100$ works.\emph{]}
\end{prop}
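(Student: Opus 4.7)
The plan is to extend the generic chaining argument of \cite{vdG00}, Theorem 8.13, so that the event $F$ and the random cap $\{R_n^\theta\le R\}$ are carried through cleanly. The proof I would give proceeds in three steps: a single-martingale Bennett inequality, a dyadic bracketing chain, and an allocation/union bound over the links.

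\textbf{Single-martingale step.} For fixed $\theta$ and $0<\lambda<1/K$, the process $Z_i(\lambda)=\exp(\lambda M_i^\theta-\phi(\lambda K)R_i^\theta/K^2)$ is a nonnegative supermartingale, by the Bernstein computation $\log\mathbf E[e^{\lambda(\xi-\mathbf E[\xi\mid\mathcal F_{-}])}\mid\mathcal F_{-}]\le\phi(\lambda K)\cdot 2\mathbf E[\phi(|\xi|/K)\mid\mathcal F_{-}]$ applied increment by increment. Since $R_i^\theta$ is $\mathcal F_{i-1}$-measurable, $\tau=\inf\{i:R_{i+1}^\theta>R\}$ is a stopping time with $\tau\ge n$ on $\{R_n^\theta\le R\}$; stopping $Z(\lambda)$ at $\tau$, applying Doob's maximal inequality, and optimizing in $\lambda$ yields
$$
  \mathbf P\bigl[\max_{i\le n}M_i^\theta\ge\beta,\ R_n^\theta\le R'\bigr]\le\exp\!\Bigl[-\tfrac{\beta^2}{c(R'+K\beta)}\Bigr]
$$
for every $\beta,R'>0$ with a universal constant $c$.

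\textbf{Bracketing chain.} Set $\delta_k=2^{-k}\sqrt R$ and choose a minimal $(n,\Theta,F,K,\delta_k)$-bracketing set with assignment $\pi_k\colon\Theta\to\{1,\dots,N_k\}$. Let $\bar\xi_i^{(k)}=\tfrac12(\Lambda_i^{\pi_k(\theta)}+\Upsilon_i^{\pi_k(\theta)})$ and $\bar M^{(k)}$ its associated martingale, and telescope
$$
  M^\theta=\bar M^{(0)}+\sum_{k=1}^{k_*}\bigl(\bar M^{(k)}-\bar M^{(k-1)}\bigr)+\bigl(M^\theta-\bar M^{(k_*)}\bigr).
$$
On $F$, the bracketing definition together with convexity of $\phi$ gives that each link $\bar M^{(k)}-\bar M^{(k-1)}$ takes at most $N_kN_{k-1}\le N_k^2$ distinct values and has conditional Bernstein sum at most $C_1\delta_{k-1}^2$, while the finest-scale residual is controlled at scale $\delta_{k_*}^2$ and the starting term $\bar M^{(0)}$ at scale $R$.

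\textbf{Allocation and union bound.} Split $\alpha=\sum_{k\ge 0}\alpha_k$ with $\alpha_k\asymp\delta_{k-1}\sqrt{\log N_k}$; the chaining-integral hypothesis then forces $\sum_k\alpha_k\le\alpha/2$ once $c_0$ exceeds a suitable universal multiple. Applying Step 1 to each of the $N_k^2$ links at level $k$ with parameters $(\alpha_k,C_1\delta_{k-1}^2)$, absorbing $K\alpha_k\le K\alpha\le c_1R$ into the Bennett denominator, and using $c_0^2\ge C^2(c_1+1)$ to defeat the prefactor $N_k^2=e^{2\log N_k}$, the level-$k$ tails form a geometric series summing to $2\exp[-\alpha^2/(C^2(c_1+1)R)]$. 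The main obstacle is the simultaneous matching of constants that produces this single clean denominator: $\alpha_k$ must absorb $\sqrt{\log N_k}$ uniformly across all levels (enforced by the chaining integral and $c_0^2\ge C^2(c_1+1)$) \emph{and} keep the Bernstein linear correction $K\alpha_k$ dominated by the quadratic scale $\delta_{k-1}^2$ (enforced by $\alpha\le c_1R/K$). Handling the coarsest level and the finest-scale residual introduces no new ideas but requires careful accounting.
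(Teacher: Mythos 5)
There is a genuine gap in the chaining step. You propose a plain dyadic chain using the midpoints $\bar\xi^{(k)}$ and apply the $\phi$-moment Bernstein inequality to each link $\bar M^{(k)}-\bar M^{(k-1)}$. That bound has the form $\exp[-\alpha_k^2/(c(K\alpha_k+\delta_{k-1}^2))]$ with the truncation scale $K$ fixed. As $k$ grows, the variance proxy $\delta_{k-1}^2=4^{-(k-1)}R$ shrinks geometrically while $\log N_k$ grows, so with the natural allocation $\alpha_k\asymp\delta_{k-1}\sqrt{\log N_k}$ the exponent is of order $4^{-k}\log N_k$, which is far too small to beat the $N_k^2$ union-bound prefactor at fine scales. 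Your ``absorb $K\alpha_k\le c_1R$'' step makes this worse, not better: it replaces the denominator by roughly $(c_1+1)R$, so the exponent is $\asymp\alpha_k^2/R\asymp 4^{-k}\log N_k$, again swamped by $e^{2\log N_k}$. You also leave the finest-scale residual $M^\theta-\bar M^{(k_*)}$ as an uncountable supremum with only a scale claim but no mechanism for controlling it.

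The missing ingredient is adaptive truncation of the chain. The paper's proof (following Massart, Theorem 6.8) chains via the upper brackets $\Pi_i^{j,\theta}$ and truncates the chain depth at the random time $\tau_i^\theta=\min\{j:\Delta_i^{j,\theta}>a_j\}\wedge J$. This makes the link increments $c_\ell^{j,\theta}$ bounded in $L^\infty$ by $a_{j-1}\vee a_j$, so the sharper Bernstein bound with $L^\infty$ scale $U=a_j$ (not $K$) applies, and the levels $a_j\asymp\delta_j/\sqrt{\log\mathbf{N}_{j+1}}$ are tuned to exactly cancel the entropy terms. The resulting extra ``truncation error'' terms $B^{j,\theta}$ carry the indicator $\one_{\tau_\ell^\theta=j}$ and are controlled deterministically via the sign structure ($\Pi\ge\xi$) and a Cauchy–Schwarz/peeling trick, avoiding any union bound. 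The paper also routes the argument through a conditional-expectation bound $\mathbf{E}^A[\cdots]\le\Phi(\log(1+1/\mathbf{P}[A]))$ obtained via Orlicz-norm maximal inequalities (Corollary~\ref{cor:unbounded}), and only then converts to a tail bound by taking $A$ to be the exceedance event; your direct tail-bound chaining is a legitimate alternative structure, but without the adaptive truncation the central estimate does not close.
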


\begin{rem}
Throughout, all uncountable suprema should be interpreted as essential 
suprema under the measure $\mathbf{P}$.  Thus measurability problems are 
avoided.
\end{rem}

For our purposes, the key improvement over \cite{vdG00}, Theorem 8.13 is 
that the bound in this result is given for $\max_{i\le n}M_i^\theta$ 
rather than $M_n^\theta$.  This is essential in order to employ the 
blocking procedure in the proof of Theorem \ref{thm:main}.  Rather than 
repeat the proof of \cite{vdG00}, Theorem 8.13 here with the necessary 
modifications, we take the opportunity to obtain a more general result 
from which Proposition \ref{prop:brkgauss} follows.\footnote{
	A closer look at the proof of \cite{vdG00}, Theorem 8.13
	reveals a few inconsistencies which are corrected here.
	For example, equation (A.12) in \cite{vdG00} seems to presuppose
	that $X\ge 0$ on an event $A$ implies that 
	$\mathbf{P}[X|\mathcal{G}]\ge 0$ on $A$, which need not be the
	case.  The bracketing condition given in \cite{vdG00}, Definition
	8.1 therefore seems too weak to give the desired result.
	Similarly, the version of Bernstein's inequality given as 
	\cite{vdG00}, Lemma 8.9 does not appear to be the one used
	in the proof of Theorem 8.13.}

\begin{thm}
\label{thm:chaining}
Fix $K>0$, and define for all $i\ge 0$
$$
        M_i^\theta = \sum_{\ell=1}^i\{\xi_\ell^\theta-
        \mathbf{E}[\xi_\ell^\theta|\mathcal{F}_{\ell-1}]\},\qquad\quad
        R_i^\theta = 2K^2\sum_{\ell=1}^i
        \mathbf{E}\left[\left.\phi\left(\frac{|\xi_\ell^\theta|}{K}
        \right)\right|\mathcal{F}_{\ell-1}\right].
$$
Then we have for any
$n\in\mathbb{N}$, $R<\infty$, $F\in\mathcal{F}$ and $x>0$
$$
        \mathbf{P}\left[
        F\cap
        \sup_{\theta\in\Theta}
        \one_{R_n^\theta\le R}
        \max_{i\le n}M_i^\theta \ge 
        16\,\mathcal{H}
        + 32\sqrt{Rx}
        + 16Kx
        \right] \le 2\,e^{-x},
$$
where we have written
$$
        \mathcal{H} =
        K\log\mathcal{N}(n,\Theta,F,K,\sqrt{R})
        + 4\int_0^{\sqrt{R}}\sqrt{\log \mathcal{N}(n,\Theta,F,K,u)}\,du.
$$
\end{thm}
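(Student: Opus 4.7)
The plan is to prove Theorem~\ref{thm:chaining} by a generic chaining argument along a geometric sequence of bracketing sets, organized so that the resulting tail bound applies to the running maximum $\max_{i\le n}M_i^\theta$. The argument reduces to three ingredients: a maximal Bernstein inequality for a single martingale, a bracketing-chain decomposition of $M^\theta$, and an optimization that converts dyadic sums into the entropy integral together with the $\sqrt{Rx}+Kx$ deviation budget stated in the theorem.

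First I would establish the single-martingale building block. For a martingale $(M_i)$ with Bernstein variance process $(R_i)$ as in the statement, the process $Z_i(\lambda)=\exp(\lambda M_i-R_i\psi(K\lambda)/K^2)$ is a supermartingale on $\{R_n\le R\}$ for $\lambda\ge 0$, where $\psi(u)=e^u-u-1$; this rests on the elementary bound $\mathbf{E}[e^{\lambda(\xi-\mathbf{E}[\xi|\mathcal{G}])}|\mathcal{G}]\le\exp(K^{-2}\psi(K\lambda)\cdot 2K^2\mathbf{E}[\phi(|\xi|/K)|\mathcal{G}])$ applied increment by increment. Combining Doob's maximal inequality with optimization over $\lambda$ yields the maximal Bernstein estimate
$$\mathbf{P}\left[\{R_n\le R\}\cap\max_{i\le n}M_i\ge a\right]\le\exp\!\Bigl(-\frac{a^2}{2(R+Ka)}\Bigr).$$
This upgrade from the non-maximal Bernstein inequality \cite{vdG00}, Lemma 8.9 is the only genuinely new ingredient relative to \cite{vdG00}, Theorem~8.13.

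Next I would carry out the chaining. Take $\delta_j=2^{-j}\sqrt{R}$ for $j=0,1,\ldots,J$, with $J$ chosen so that $\delta_J$ is small compared with the deviation of interest, and let $\mathcal{B}_j$ be a minimal $(n,\Theta,F,K,\delta_j)$-bracketing set of cardinality $N_j$. For each $\theta$, pick a chain of brackets $(\Lambda^{j,\theta},\Upsilon^{j,\theta})$ containing $\xi^\theta$, set $\eta^{j,\theta}_i=\tfrac12(\Lambda^{j,\theta}_i+\Upsilon^{j,\theta}_i)$, and let $N^{j,\theta}$ be the centered martingale built from $\eta^{j,\theta}$. One then decomposes
$$M^\theta_i=N^{0,\theta}_i+\sum_{j=1}^J\bigl(N^{j,\theta}_i-N^{j-1,\theta}_i\bigr)+\Delta^\theta_i,$$
where $\Delta^\theta$ is a centered residual bounded pointwise by the level-$J$ bracket width. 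By the sandwich $\Lambda_i^{j,\theta}\le\xi_i^\theta\le\Upsilon_i^{j,\theta}$ and the monotonicity of $\phi(|\cdot|)$, the Bernstein variance process of each link $N^{j,\theta}-N^{j-1,\theta}$ is controlled on $F$ by a constant multiple of $\delta_{j-1}^2$. Applying the maximal Bernstein inequality from Step~1 to each link, taking a union bound over the at most $N_jN_{j-1}$ possible adjacent chain pairs, and allocating a deviation $a_j\asymp\delta_{j-1}\sqrt{\log(N_jN_{j-1})+x+j}+K(\log(N_jN_{j-1})+x+j)$ at level $j$ so that the $j$th tail is at most $e^{-x-j}/2$, one finds by the standard dyadic-to-integral estimate that $\sum_j a_j\lesssim\int_0^{\sqrt{R}}\sqrt{\log\mathcal{N}(n,\Theta,F,K,u)}\,du+\sqrt{Rx}+K\log N_0+Kx$. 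A variance-free union bound at the coarsest level contributes the remaining $K\log\mathcal{N}(n,\Theta,F,K,\sqrt R)$ term in $\mathcal{H}$, and summing the tails in $j$ gives the geometric factor absorbed into the constants $16$, $32$ appearing in the statement.

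The main obstacle will be the bookkeeping required to push the running maximum through the chain. Concretely, one must verify that on $F\cap\{R_n^\theta\le R\}$ the Bernstein variance processes of both the centered approximations $N^{j,\theta}$ and of their successive differences are simultaneously controlled by the bracketing condition of Definition~\ref{defn:brackets}, using only $\mathcal{F}_{i-1}$-measurable proxies so that the Doob step on the exponential supermartingale is legitimate at each link. This is precisely the point where the proof of \cite{vdG00}, Theorem~8.13 appears to cut corners, as noted in the footnote; the $2K^2$ factor built into Definition~\ref{defn:brackets} is chosen to make this step go through cleanly. A secondary nuisance is the treatment of the residual $\Delta^\theta$, handled by taking $J$ large enough that $K\delta_J$ is absorbed into the $Kx$ term without affecting the stated bound.
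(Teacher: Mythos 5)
Your outline captures the two standard ingredients — a maximal Bernstein inequality for a single martingale (obtained via an exponential supermartingale and Doob's inequality) and a dyadic bracketing chain — but it misses the one device that actually makes the stated bound come out, namely \emph{adaptive truncation}. Let me explain why this is not a bookkeeping detail.

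In your decomposition $M^\theta=N^{0,\theta}+\sum_{j=1}^J(N^{j,\theta}-N^{j-1,\theta})+\Delta^\theta$ with midpoints $\eta^{j,\theta}_i=\tfrac12(\Lambda^{j,\theta}_i+\Upsilon^{j,\theta}_i)$, the increments of each link $N^{j,\theta}-N^{j-1,\theta}$ are controlled only in Bernstein norm (by $\sim\delta_{j-1}^2$), not in $L^\infty$: a bracket width $\Delta^{j,\theta}_i$ can be pointwise as large as $|\xi^\theta_i|$. Consequently, the maximal Bernstein inequality applied to a single link gives a tail of the form $\exp[-a^2/(2(\delta_{j-1}^2+Ka))]$, whose sub-exponential part has scale $K$ \emph{at every level $j$}. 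Your own allocation $a_j\asymp\delta_{j-1}\sqrt{\log N_j^2+x+j}+K(\log N_j^2+x+j)$ then contributes $K\sum_{j\le J}(\log N_j^2+x+j)$ to $\sum_j a_j$. This sum diverges as $J\to\infty$ (already the $\sum_j j$ and $\sum_j x$ parts do), and even for finite $J$ it is a $K\sum_j\log N_j$ term, not the $K\log\mathcal{N}(n,\Theta,F,K,\sqrt R)$ term appearing in $\mathcal{H}$. So the scheme as written does not produce the claimed bound. The residual $\Delta^\theta$ cannot be fixed by choosing $J$ large either, because it is bounded only by the random level-$J$ bracket width, not by $K\delta_J$ pointwise; in the paper this residual is handled by Cauchy--Schwarz to get the $\delta_J\sqrt n$ term, and this is only harmless because the rest of the bound is $J$-free.

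The paper's fix is the stopping time $\tau^\theta_i=\min\{j\ge 0:\Delta^{j,\theta}_i>a_j\}\wedge J$, which truncates the chain at the level where the bracket width first exceeds a deterministic threshold $a_j$. This has two effects that you need. First, the chain-link increments $c^{j,\theta}_i$ become \emph{genuinely bounded} by $a_{j-1}\vee a_j$, so the sub-exponential scale in the Bernstein bound decreases geometrically with $j$ rather than staying fixed at $K$; only the coarsest level $A^\theta=\sum_\ell\{\Pi^{0,\theta}_\ell-\mathbf{E}[\cdot]\}$ retains the scale $K$, which is exactly why a single $K\log\mathcal{N}(\cdot,\sqrt R)$ term appears in $\mathcal{H}$. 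Second, the terms $b^{j,\theta}_i$ arising from the event $\{\tau^\theta_i=j\}$ for $j<J$ are controlled \emph{deterministically} on $F$ by $\delta_j^2/a_j$ (via Markov's inequality on the conditional Bernstein norm), so no union bound and no additional $K(\log N_j+x+j)$ allocation is needed at those levels; the $a_j$ are then chosen precisely to balance $\delta_j^2/a_j$ against the Bernstein contributions from the $c^{j,\theta}$-links. Without this structure the entropy integral alone does not close. Two smaller deviations from the paper: the paper works with the upper bracket $\Pi^{j,\theta}=\Upsilon^{j,\rho(j,\theta)}$ rather than midpoints (either would work here), and it obtains the tail bound via a restricted-expectation/Orlicz-norm argument (bounding $\mathbf{E}^A[\sup_\theta\cdots]$ for arbitrary $A\subseteq F$, then inverting) rather than a direct union bound in probability; that is a matter of convenience, not correctness. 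But the missing adaptive truncation is a genuine gap: as formulated, your chain does not yield the stated $\mathcal{H}+\sqrt{Rx}+Kx$ budget.
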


Before we proceed, let us prove Proposition \ref{prop:brkgauss} using
Theorem \ref{thm:chaining}.

\begin{proof}[Proof of Proposition \ref{prop:brkgauss}]
Let $\alpha=\sqrt{C^2(c_1+1)R\,x}$ and assume that the given bounds on 
$\alpha$ hold.  Then we can estimate
$$
        x = \frac{\alpha^2}{C^2(c_1+1)R} \le
        \frac{c_1R}{K}\times\frac{\alpha}{C^2(c_1+1)R} \le
        \frac{\alpha}{C^2K},\qquad
	\alpha = (\sqrt{\alpha})^2
        \le \sqrt{\frac{c_1R\alpha}{K}}.
$$
On the other hand, as $\mathcal{N}(n,\Theta,F,K,\delta)$ is nonincreasing,
we have
$$
        c_0\sqrt{R\log \mathcal{N}(n,\Theta,F,K,\sqrt{R})} \le
        c_0\int_0^{\sqrt{R}}\sqrt{\log \mathcal{N}(n,\Theta,F,K,u)}\,du
        \le \alpha.
$$
Applying Theorem \ref{thm:chaining}, we find that
\begin{multline*}
        \mathbf{P}\left[
        F\cap
        \sup_{\theta\in\Theta}
        \one_{R_n^\theta\le R}
        \max_{i\le n}M_i^\theta \ge \left\{
        \frac{16c_1}{c_0^2}
        + \frac{64}{c_0}
        + \frac{32}{\sqrt{C^2(c_1+1)}} 
        + \frac{16}{C^2}
        \right\}\alpha
        \right] \\ \mbox{}
        \le 2\,\exp\left[-\frac{\alpha^2}{C^2(c_1+1)R}\right].
\end{multline*}
But using $c_0^2\ge C^2(c_1+1)\ge C^2$, we can estimate
$$
        \frac{16c_1}{c_0^2}
        + \frac{64}{c_0}
        + \frac{32}{\sqrt{C^2(c_1+1)}} 
        + \frac{16}{C^2}
        \le
        \frac{32}{C^2}
        + \frac{96}{C}
        \le 1
$$
for $C$ sufficiently large (e.g., $C=100$).
\end{proof}

The remainder of the Appendix is devoted to the proof of Theorem 
\ref{thm:chaining}.  It should be emphasized that the approach taken here 
is entirely standard in empirical process theory: the notion of bracketing 
entropy for martingales and the proof of the requisite form of Bernstein's 
inequality follows van de Geer \cite{vdG00}, while the relatively 
transparent proof of Theorem \ref{thm:chaining} closely follows the proof 
given by Massart \cite{Mas07}, Theorem 6.8 in the i.i.d.\ setting.  The 
full proofs are given here for completeness.  Note also that we have made 
no effort to optimize the constants in the proof (the constants are 
necessarily somewhat larger than those obtained in \cite{Mas07} due to the 
presence of the additional maximum $\max_{i\le n}M_i^\theta$).

\subsection{A variant of Bernstein's inequality}

The following result is a variant of Bernstein's inequality for 
martingales. It slightly improves on \cite{vdG00}, Lemma 8.11 in that we 
do not assume that $\mathbf{E}[\xi_i|\mathcal{F}_{i-1}]=0$ for all $i$ 
(though it appears that this version is implicitly used in the proof of 
\cite{vdG00}, Theorem 8.13).

\begin{prop}
\label{prop:bernstein}
Let $(\xi_i)_{i\ge 1}$ be a sequence of random variables 
such that $\xi_i$ is $\mathcal{F}_i$-measurable for all $i$, and
define the martingale
$$
        M_j = \sum_{i=1}^j\{\xi_i -
        \mathbf{E}[\xi_i|\mathcal{F}_{i-1}]\}\quad\mbox{for all }j\ge 0.
$$
Fix $K>0$, and let $(Z_j)_{j\ge 0}$ be predictable (i.e., $Z_j$ 
is $\mathcal{F}_{j-1}$-measurable) such that
$$
        \sum_{i=1}^j\mathbf{E}\left[\left.
        |\xi_i|^m
        \right|\mathcal{F}_{i-1}\right]
        \le m!K^mZ_j\quad
        \mbox{for all }m\ge 2,~j\ge 0.
$$
Then we have for all $\alpha>0$ and $Z>0$
$$
        \mathbf{P}\left[
        M_j\ge\alpha\mbox{ and }Z_j\le Z\mbox{ for some }j
        \right]\le
        \exp\left[
        -\frac{\alpha^2}{2K(\alpha+2KZ)}
        \right].
$$
\end{prop}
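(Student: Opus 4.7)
The statement is a Freedman-type Bernstein inequality for martingales controlled through cumulative conditional absolute moments, and I will prove it via the classical Cram\'er--Chernoff route through an exponential supermartingale. Writing $\tilde\xi_i := \xi_i - \mathbf{E}[\xi_i|\mathcal{F}_{i-1}]$ so that $M_j = \sum_{i=1}^j \tilde\xi_i$, the goal for each $\lambda \in (0, 1/(2K))$ is to construct a predictable increasing process $A_j(\lambda)$ such that
\[
W_j(\lambda) := \exp\bigl(\lambda M_j - A_j(\lambda)\bigr)
\]
is a nonnegative supermartingale with $W_0(\lambda)=1$.

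The key step is a bound on the conditional Laplace transform. Starting from the identity $e^{\lambda\tilde\xi_i} = 1 + \lambda\tilde\xi_i + \phi(\lambda\tilde\xi_i)$ and taking $\mathbf{E}[\,\cdot\,|\mathcal{F}_{i-1}]$ removes the linear term. To express the remainder via moments of $\xi_i$ (not of $\tilde\xi_i$, for which no hypothesis is available), I will use $\phi(y) \le \phi(|y|)$ together with the Minkowski--Jensen bound
\[
\mathbf{E}[|\tilde\xi_i|^m|\mathcal{F}_{i-1}]^{1/m} \le \mathbf{E}[|\xi_i|^m|\mathcal{F}_{i-1}]^{1/m} + |\mathbf{E}[\xi_i|\mathcal{F}_{i-1}]| \le 2\,\mathbf{E}[|\xi_i|^m|\mathcal{F}_{i-1}]^{1/m},
\]
and the inequality $1+x\le e^x$. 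This yields
\[
\mathbf{E}[e^{\lambda\tilde\xi_i}|\mathcal{F}_{i-1}] \le \exp\Bigl(\sum_{m\ge 2}\frac{(2\lambda)^m}{m!}\,\mathbf{E}[|\xi_i|^m|\mathcal{F}_{i-1}]\Bigr),
\]
so the natural choice $A_j(\lambda) = \sum_{i=1}^j\sum_{m\ge 2}(2\lambda)^m\mathbf{E}[|\xi_i|^m|\mathcal{F}_{i-1}]/m!$ turns $W_j(\lambda)$ into a supermartingale. The cumulative moment hypothesis then gives $A_j(\lambda) \le (2\lambda K)^2 Z_j/(1-2\lambda K)$ provided $\lambda<1/(2K)$, the denominator being the reason for the truncation on the range of $\lambda$.

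A direct Chernoff argument closes the loop: on the event $\{\exists j : M_j \ge \alpha,\ Z_j \le Z\}$ we have $\sup_j W_j(\lambda) \ge \exp\bigl(\lambda\alpha - (2\lambda K)^2 Z/(1-2\lambda K)\bigr)$, so Doob's $L^1$ maximal inequality applied to the nonnegative supermartingale $W_j(\lambda)$ (whose expectation is bounded by $1$) yields
\[
\mathbf{P}\bigl[\exists j : M_j \ge \alpha,\ Z_j \le Z\bigr] \le \exp\Bigl(-\lambda\alpha + \frac{(2\lambda K)^2 Z}{1-2\lambda K}\Bigr).
\]
A Bernstein-type optimization in $\lambda \in (0, 1/(2K))$, roughly $\lambda \sim \alpha/(K\alpha + K^2 Z)$ truncated at $1/(2K)$, then produces the claimed exponent of the form $-\alpha^2/(2K(\alpha+2KZ))$. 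I expect the main obstacle to be purely computational bookkeeping: keeping track of the factor $2^m$ introduced by the centering step and carrying out the Bernstein optimization cleanly enough to recover the stated constants. Conceptually, the essential point is that no pointwise bound on $|\xi_i|$ is ever needed --- the geometric series $\sum_m(2\lambda K)^m$ converges precisely because the cumulative hypothesis is strong enough to propagate through all moments simultaneously for $\lambda < 1/(2K)$.
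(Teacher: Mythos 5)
Your blueprint --- exponential supermartingale $e^{\lambda M_j - A_j(\lambda)}$, stop at the first time $M_j \ge \alpha$ with $Z_j \le Z$, then optimize $\lambda$ \`a la Bernstein --- is exactly the paper's. The one place you deviate is the construction of the compensator $A_j(\lambda)$, and that deviation costs you the stated constant. By centering first, i.e.\ working with $\tilde\xi_i = \xi_i - \mathbf{E}[\xi_i|\mathcal{F}_{i-1}]$ and invoking Minkowski--Jensen, you pay a factor $2^m$ on the $m$th conditional moment of $\tilde\xi_i$. This forces the geometric series to require $\lambda < 1/(2K)$, and optimizing $-\lambda\alpha + (2\lambda K)^2 Z/(1-2\lambda K)$ over this range yields at best an exponent of $-\alpha^2/(4K(\alpha+4KZ))$, which is strictly weaker than the claimed $-\alpha^2/(2K(\alpha+2KZ))$ (by roughly a factor of $4$ when $KZ$ dominates $\alpha$). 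The ``computational bookkeeping'' you hope to resolve is in fact an irrecoverable loss on this route --- no Minkowski-type step returns the factor $2$ --- so as written your argument proves a Bernstein inequality of the right shape but does not prove the Proposition as stated.

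The paper sidesteps this loss by \emph{not} centering inside $\phi$. It takes $S_j = e^{\lambda M_j - Z_j^\lambda}$ with $Z_j^\lambda = \sum_{i\le j}\mathbf{E}[\phi(\lambda|\xi_i|)|\mathcal{F}_{i-1}]$ (note $|\xi_i|$, not $|\tilde\xi_i|$) and verifies the supermartingale property directly: from the identity $e^{\lambda\xi_j} = 1 + \lambda\xi_j + \phi(\lambda\xi_j)$ and $\phi(x)\le\phi(|x|)$ one gets $\mathbf{E}[S_j/S_{j-1}\,|\,\mathcal{F}_{j-1}] \le (1+y)e^{-y} \le 1$ with $y = \mathbf{E}[\lambda\xi_j + \phi(\lambda|\xi_j|)\,|\,\mathcal{F}_{j-1}]$. (The paper's displayed chain actually routes through $e^{-v}\le(1+v)^{-1}$ and then asserts $u/(1+v)\le u$, which fails when $\mathbf{E}[\xi_j|\mathcal{F}_{j-1}]<0$; the clean justification is the one-line $(1+y)e^{-y}\le 1$ bound just given.) Because $\phi$ is applied to $\lambda|\xi_i|$ rather than $\lambda|\tilde\xi_i|$, the moment hypothesis gives $Z_j^\lambda \le \lambda^2 K^2 Z_j/(1-\lambda K)$ for all $\lambda<1/K$ with no factor of two, and the choice $\lambda^{-1}=K+2K^2Z/\alpha$ produces exactly the stated exponent. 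To recover the Proposition verbatim, replace your centered compensator by this uncentered one; the rest of your stopping-time and optimization steps then go through unchanged.
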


\begin{proof}
Given $\lambda^{-1}>K$ we define the process $(S_j)_{j\ge 0}$ as
$S_j = e^{\lambda M_j - Z_j^\lambda}$, where $Z_j^\lambda = \sum_{i=1}^j
\mathbf{E}\left[\left.\phi(\lambda|\xi_i|)\right|
\mathcal{F}_{i-1}\right]$.
Using $1+x\le e^x$, we find
$$
        \frac{S_j}{S_{j-1}} =
        e^{\lambda\xi_j-\mathbf{E}[\lambda\xi_j|\mathcal{F}_{j-1}] -
        \mathbf{E}[\phi(\lambda|\xi_j|)|\mathcal{F}_{j-1}]}
	\le
        \frac{\{1+\phi(\lambda\xi_j)+\lambda\xi_j\}
        e^{-\mathbf{E}[\lambda\xi_j|\mathcal{F}_{j-1}]}}
        {1+\mathbf{E}[\phi(\lambda|\xi_j|)|\mathcal{F}_{j-1}]}.
$$
Now using the basic property $\phi(x)\le\phi(|x|)$ and
$1+x\le e^x$, we have
\begin{equation*}
\begin{split}
        \mathbf{E}\left[\left.\frac{S_j}{S_{j-1}}\right|\mathcal{F}_{j-1}\right]
        &\le 
        e^{-\mathbf{E}[\lambda\xi_j|\mathcal{F}_{j-1}]}
        \left\{1+
        \frac{
        \mathbf{E}[\lambda\xi_j|\mathcal{F}_{j-1}]}
        {1+\mathbf{E}[\phi(\lambda|\xi_j|)|\mathcal{F}_{j-1}]}
        \right\} 
	\\ &\le
        e^{-\mathbf{E}[\lambda\xi_j|\mathcal{F}_{j-1}]}\left\{
        1+\mathbf{E}[\lambda\xi_j|\mathcal{F}_{j-1}]
        \right\} \le 1.
\end{split}
\end{equation*}
Thus $S_j$ is a positive supermartingale.  To proceed, define the 
stopping time
$$
        \tau=\min\{j:M_j\ge\alpha\mbox{ and }Z_j\le Z\}.
$$
Then $\{M_j\ge\alpha\mbox{ and }Z_j\le Z\mbox{ for some }j\}=
\{\tau<\infty\}$.  Moreover, as $\lambda^{-1}>K$
$$
        Z_j^\lambda = 
        \sum_{\ell=2}^\infty
        \frac{\lambda^\ell}{\ell!}
        \sum_{i=1}^j
        \mathbf{E}\left[\left.|\xi_i|^\ell\right|
        \mathcal{F}_{i-1}\right]
        \le
        Z_j
        \sum_{\ell=2}^\infty
        (\lambda K)^\ell
        =
        \frac{\lambda^2K^2}{1-\lambda K}\,Z_j\quad\mbox{for all }j.
$$
Therefore $Z_\tau^\lambda\le \lambda^2K^2Z_\tau/(1-\lambda K)$, and we 
can estimate
$$
        S_\tau = e^{\lambda M_\tau - Z_\tau^\lambda} \ge 
        e^{\lambda M_\tau-\lambda^2K^2Z_\tau/(1-\lambda K)} \ge
        e^{\lambda\alpha-\lambda^2K^2Z/(1-\lambda K)}\quad
        \mbox{on }\{\tau<\infty\}.
$$
We obtain, using the supermartingale property,
$$
        \mathbf{P}[\tau<\infty] \le
        \mathbf{E}[\one_{\{\tau<\infty\}}e^{\lambda^2K^2Z/(1-\lambda K)
        -\lambda\alpha}S_\tau]
        \le e^{\lambda^2K^2Z/(1-\lambda K)-\lambda\alpha}.
$$
The proof is completed by choosing $\lambda^{-1}=K+2K^2Z/\alpha$.
\end{proof}

\begin{cor}
\label{cor:bernstein}
Let $(\xi_i)_{1\le i\le n}$ be a sequence of random variables 
such that $\xi_i$ is $\mathcal{F}_i$-measurable for all $i$,
and fix $K>0$. Define $(M_j)_{0\le j\le n}$ and
$(R_j)_{0\le j\le n}$ as
$$
        M_j = \sum_{i=1}^j\{\xi_i -
        \mathbf{E}[\xi_i|\mathcal{F}_{i-1}]\},\qquad\quad
        R_j = 2K^2\sum_{i=1}^j
        \mathbf{E}\left[\left.\phi\left(\frac{|\xi_i|}{K}\right)\right|
        \mathcal{F}_{i-1}\right].
$$
Then we have for all $\alpha>0$ and $R>0$
$$
        \mathbf{P}\left[
        \max_{j\le n}M_j\ge\alpha\mbox{ and }R_n\le R
        \right]\le
        \exp\left[
        -\frac{\alpha^2}{2(K\alpha+R)}
        \right].
$$
If in addition $\|\xi_i\|_\infty\le 3U$ for all $i$, then
for all $\alpha>0$ and $R>0$
$$
        \mathbf{P}\left[
        \max_{j\le n}M_j\ge\alpha\mbox{ and }R_n\le R
        \right]\le
        \exp\left[
        -\frac{\alpha^2}{2(U\alpha+R)}
        \right].
$$
\end{cor}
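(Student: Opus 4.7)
The plan is to reduce both inequalities to Proposition \ref{prop:bernstein} by exhibiting, in each case, a predictable envelope $Z_j$ and a constant $K'$ for which the Bernstein-type moment condition
$$
\sum_{i=1}^j \mathbf{E}\left[\left.|\xi_i|^m\right|\mathcal{F}_{i-1}\right] \le m!\,(K')^m\,Z_j \qquad\text{for all }m\ge 2
$$
is satisfied and for which $Z_j$ is dominated by a deterministic bound on the event $\{R_n\le R\}$. The summands defining $R_j$ and $V_j:=\sum_{i=1}^j\mathbf{E}[|\xi_i|^2|\mathcal{F}_{i-1}]$ are $\mathcal{F}_{i-1}$-measurable, so both are predictable and nondecreasing in $j$; thus on $\{R_n\le R\}$ one has $Z_j\le Z$ simultaneously for all $j\le n$, so the pointwise event ``$M_j\ge\alpha$ and $Z_j\le Z$ for some $j$'' from Proposition \ref{prop:bernstein} dominates the event $\{\max_{j\le n}M_j\ge\alpha\}\cap\{R_n\le R\}$ appearing in the Corollary.

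For the first inequality, I would expand the Taylor series $\phi(x)=\sum_{m\ge 2}x^m/m!$ and interchange sums to write
$$
R_j = \sum_{m\ge 2}\frac{2K^{2-m}}{m!}\sum_{i=1}^j \mathbf{E}\left[\left.|\xi_i|^m\right|\mathcal{F}_{i-1}\right].
$$
Every term on the right is nonnegative, so dropping all but the $m$-th gives $\sum_i\mathbf{E}[|\xi_i|^m|\mathcal{F}_{i-1}]\le m!\,K^m\cdot R_j/(2K^2)$ for every $m\ge 2$. Applying Proposition \ref{prop:bernstein} with $K'=K$, $Z_j=R_j/(2K^2)$ and $Z=R/(2K^2)$ produces the denominator $2K(\alpha+2K\cdot R/(2K^2))=2(K\alpha+R)$, which is the first claimed bound.

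For the sharper second inequality, the assumption $\|\xi_i\|_\infty\le 3U$ gives $|\xi_i|^m\le (3U)^{m-2}|\xi_i|^2$, hence $\sum_i\mathbf{E}[|\xi_i|^m|\mathcal{F}_{i-1}]\le (3U)^{m-2}V_j$. I would control $V_j$ by $R_j$ through the elementary bound $\phi(x)\ge x^2/2$, which yields $V_j\le R_j$, and then verify the arithmetic inequality $2\cdot 3^{m-2}\le m!$ for all $m\ge 2$ (tight at $m=2,3$ with an easy induction afterwards). Together these rewrite the moment estimate as $\sum_i\mathbf{E}[|\xi_i|^m|\mathcal{F}_{i-1}]\le m!\,U^m\cdot V_j/(2U^2)$, so Proposition \ref{prop:bernstein} applies with $K'=U$, $Z_j=V_j/(2U^2)$ and $Z=R/(2U^2)$, yielding the bound $\exp[-\alpha^2/(2(U\alpha+R))]$.

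There is no serious obstacle: both parts amount to identifying the correct predictable envelope to feed into Proposition \ref{prop:bernstein}. The minor bookkeeping is the predictability and monotonicity of $R_j$ and $V_j$, and the numerical inequality $2\cdot 3^{m-2}\le m!$. The one conceptual observation is that $R_j$ itself, through its Taylor expansion, already dominates every moment sum simultaneously, so the first part requires no boundedness hypothesis whatsoever, while the second part trades the factor $K$ in the linear term for the smaller $U$ by routing the higher moments through the quadratic variation $V_j\le R_j$.
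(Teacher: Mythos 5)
Your argument is correct and coincides with the paper's own proof: both parts reduce to Proposition \ref{prop:bernstein} by exhibiting the predictable envelope $Z_j$, using the Taylor expansion of $\phi$ to bound a single moment by $R_j/(2K^2)$ in the first part, and routing higher moments through the quadratic variation $V_j\le R_j$ together with $m!\ge 2\cdot 3^{m-2}$ in the second. The only cosmetic difference is that you apply the proposition with $Z_j=V_j/(2U^2)$ rather than the paper's $Z_j=R_j/(2U^2)$; since $V_j\le R_j\le R_n$ on $\{R_n\le R\}$, both lead to the same conclusion.
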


\begin{proof}
To obtain the first inequality, note that for any $m\ge 2$ and $j\ge 0$
$$
        \frac{1}{m!K^m}
        \sum_{i=1}^j\mathbf{E}\left[\left.
        |\xi_i|^m
        \right|\mathcal{F}_{i-1}\right]
        \le 
        \sum_{m=2}^\infty
        \frac{1}{m!K^m}
        \sum_{i=1}^j\mathbf{E}\left[\left.
        |\xi_i|^m
        \right|\mathcal{F}_{i-1}\right]
        =
        \frac{R_j}{2K^2}.
$$
We can therefore apply Proposition \ref{prop:bernstein} with
$Z_j=R_j/2K^2$.  For the second inequality, note that 
$\|\xi_i\|_\infty\le 3U$ implies that for all $m\ge 2$ and $j\ge 0$
$$
        \sum_{i=1}^j\mathbf{E}\left[\left.
        |\xi_i|^m
        \right|\mathcal{F}_{i-1}\right]
        \le
        (3U)^{m-2}
        \sum_{i=1}^j    
        \mathbf{E}\left[\left.
        |\xi_i|^2
        \right|\mathcal{F}_{i-1}\right]
        \le
        (3U)^{m-2}R_j \le
        \frac{m!U^mR_j}{2U^2},
$$
where we used that $m!\ge 2\times 3^{m-2}$ for $m\ge 2$.  We can therefore 
apply Proposition \ref{prop:bernstein} with $Z_j=R_j/2U^2$.   It remains 
to use that $R_j$ is nondecreasing.
\end{proof}

\subsection{Maximal inequalities for finite sets}

The following result allows us to control finite families of random 
variables that satisfy a Bernstein-type deviation inequality.  A sharper
form of this result can be obtained using an estimate on the moment 
generating function of the random variables, see \cite{Mas07}, Lemma 2.3,
but we do not have such an estimate for the maximum 
$\max_{i\le n}M_i^\theta$.  Throughout the remainder of the Appendix,
we define $\mathbf{E}^A[X]=\mathbf{E}[\one_AX]/\mathbf{P}[A]$ for any 
event $A\in\mathcal{F}$.

\begin{lem}
Let $X_1,\ldots,X_N$ be random variables such that
$$
        \mathbf{P}[|X_i|\ge\alpha] \le \exp\left[
        -\frac{\alpha^2}{2(K\alpha+R)}\right]
        \quad\mbox{for all }1\le i\le N.
$$
Then we have for any event $A\in\mathcal{F}$
$$
        \mathbf{E}^A\left[\max_{i=1,\ldots,N}|X_i|\right]
        \le \sqrt{8R\,\log\left(1+\frac{N}{\mathbf{P}[A]}\right)} +
        8K\,\log\left(1+\frac{N}{\mathbf{P}[A]}\right).
$$
\end{lem}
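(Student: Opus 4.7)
The plan is to lift the one-variable Bernstein tail to a tail for $Y := \max_{i\le N}|X_i|$ via the union bound, and then integrate. Set $\psi(\alpha) = \alpha^2/(2(K\alpha + R))$ and $L = \log(1 + N/\mathbf{P}[A])$. By the hypothesis and the union bound,
$$
	\mathbf{P}^A[Y \ge \alpha] \le \min\!\Bigl(1,\tfrac{N}{\mathbf{P}[A]}\,e^{-\psi(\alpha)}\Bigr) \le \min\!\bigl(1,e^{L-\psi(\alpha)}\bigr) = e^{-(\psi(\alpha)-L)_+},
$$
where the key step is $N/\mathbf{P}[A] \le 1 + N/\mathbf{P}[A] = e^L$, which absorbs the prefactor into the exponent in a form independent of the sign of $\log(N/\mathbf{P}[A])$.

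Next, I would substitute $u = \psi(\alpha)$ in the layer-cake identity $\mathbf{E}^A[Y] = \int_0^\infty \mathbf{P}^A[Y > \alpha]\,d\alpha$. As $\psi$ is a strictly increasing bijection of $[0,\infty)$, an integration by parts produces
$$
	\mathbf{E}^A[Y] \le \int_0^\infty e^{-(u-L)_+}\,d\psi^{-1}(u) = \int_L^\infty \psi^{-1}(u)\,e^{-(u-L)}\,du,
$$
with the boundary terms vanishing because $\psi^{-1}(0)=0$ and $\psi^{-1}$ grows at most linearly. Solving the quadratic $\alpha^2 - 2Ku\,\alpha - 2Ru = 0$ and applying $\sqrt{a+b}\le\sqrt{a}+\sqrt{b}$ then gives the Bernstein-style inversion
$$
	\psi^{-1}(u) = Ku + \sqrt{K^2 u^2 + 2Ru} \le 2Ku + \sqrt{2Ru}.
$$

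Inserting this bound under the integral and substituting $v = u - L$ reduces the task to two elementary integrals: $\int_0^\infty (v+L)\,e^{-v}\,dv = L+1$ and $\int_0^\infty \sqrt{v+L}\,e^{-v}\,dv \le \sqrt{L} + 1/(2\sqrt{L})$, where the latter uses the concavity bound $\sqrt{v+L} - \sqrt{L} \le v/(2\sqrt{L})$. Assembling,
$$
	\mathbf{E}^A[Y] \le 2K(L+1) + \sqrt{2RL}\,\Bigl(1 + \tfrac{1}{2L}\Bigr).
$$
Since $N \ge 1$ and $\mathbf{P}[A] \le 1$ we always have $L \ge \log 2 > 1/2$, so the elementary inequalities $L+1 \le 4L$ and $1 + 1/(2L) \le 2$ produce the target bound $8KL + \sqrt{8RL}$.

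The technical heart of this plan is the inversion step together with the calibration of constants so that the unconditional lower bound $L \ge \log 2$ suffices. Indeed, it is precisely this lower bound that justifies the appearance of $\log(1 + N/\mathbf{P}[A])$ rather than $\log(N/\mathbf{P}[A])$ in the statement; the $+1$ inside the logarithm is exactly what is required to keep both contributions meaningful when $N$ is only slightly larger than $\mathbf{P}[A]$.
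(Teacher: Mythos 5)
Your proof is correct, and it takes a genuinely different route from the paper. The paper proves this lemma via Orlicz norms: each $X_i$ is split into a bounded part $X_i\one_{|X_i|\le R/K}$ with a sub-Gaussian tail and an unbounded part $X_i\one_{|X_i|>R/K}$ with a sub-exponential tail, \cite{VW96}, Lemma 2.2.1 then controls the $\psi_2$- and $\psi_1$-Orlicz norms by $\sqrt{8R}$ and $8K$ respectively, and the general Young-function maximal inequality $\mathbf{E}^A[\max_i |X_i|]\le\psi^{-1}(N/\mathbf{P}[A])\max_i\|X_i\|_\psi$ yields the two terms, with the $+1$ inside the logarithm coming from $\psi_p^{-1}(y)=(\log(1+y))^{1/p}$. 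Your argument instead bounds $\mathbf{P}^A[\max_i|X_i|\ge\alpha]\le\min(1,e^{L-\psi(\alpha)})$ by the union bound, integrates the tail by the layer-cake formula with the change of variable $u=\psi(\alpha)$, and inverts the Bernstein exponent explicitly via the quadratic formula. This avoids Orlicz-norm machinery entirely and is arguably more transparent, at the price of the calibration step in which the unconditional lower bound $L\ge\log 2>1/2$ (a consequence of $N\ge 1$, $\mathbf{P}[A]\le 1$) is used to absorb the lower-order terms $2K(L+1)\le 8KL$ and $\sqrt{2RL}(1+1/(2L))\le\sqrt{8RL}$; in your version the $+1$ in the logarithm is used precisely to furnish this lower bound, whereas in the paper it is an artifact of the Young-function inversion. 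Both routes deliver the same constants, and your version could be slightly sharpened (one could get closer to $2KL+\sqrt{2RL}$) if one did not insist on matching the particular constants of the stated lemma.
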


\begin{proof}
Let $\psi(x)$ be a Young function.  Then
\begin{multline*}
        \psi\left(\frac{
        \mathbf{E}^A\left[\max_{i\le N}|X_i|\right]}{
        \max_{i\le N}\|X_i\|_\psi}
        \right) \le
        \mathbf{E}^A\left[\max_{i\le N}
        \psi\left(\frac{|X_i|}{\|X_i\|_\psi}\right)\right]
	\\
	\mbox{}
        \le
        \sum_{i\le N}\mathbf{E}^A\left[\psi\left(
        \frac{|X_i|}{\|X_i\|_\psi}\right)\right]
        \le
	\frac{1}{\mathbf{P}[A]}
        \sum_{i\le N}\mathbf{E}\left[\psi\left(
        \frac{|X_i|}{\|X_i\|_\psi}\right)\right]
        \le
        \frac{N}{\mathbf{P}[A]},
\end{multline*}
where $\|\cdot\|_\psi$ denotes the Orlicz norm.  Therefore
$$
        \mathbf{E}^A\left[\max_{i=1,\ldots,N}|X_i|\right] \le
        \psi^{-1}\left(
        \frac{N}{\mathbf{P}[A]}
        \right)
        \max_{i=1,\ldots,N}\|X_i\|_\psi.
$$
To proceed, note that for $1\le i\le N$
\begin{equation*}
\begin{split}
        &\mathbf{P}[|X_i|\one_{|X_i|\le R/K}\ge\alpha] =
        \mathbf{P}[R/K\ge |X_i|\ge\alpha]
        \le 
        \exp\left[
        -\frac{\alpha^2}{4R}\right],\\
        &\mathbf{P}[|X_i|\one_{|X_i|\ge R/K}\ge\alpha] =
        \mathbf{P}[|X_i|\ge\alpha\vee R/K] \le
        \exp\left[-\frac{\alpha}{4K}\right].
\end{split}
\end{equation*}
By \cite{VW96}, Lemma 2.2.1, $\|X_i\one_{|X_i|\le R/K}\|_{\psi_2}
\le\sqrt{8R}$ and $\|X_i\one_{|X_i|\ge R/K}\|_{\psi_1}\le 8K$ for all $i$, 
where $\psi_p(x)=e^{x^p}-1$.  The proof is easily completed.
\end{proof}

\begin{cor}
\label{cor:unbounded}
Let $(\xi_i^h)_{1\le i\le n}$, $h=1,\ldots,N$ be random variables 
such that $\xi_i^h$ is $\mathcal{F}_i$-measurable for all $i,h$.
Fix $K>0$, and define
$$
        M_j^h = \sum_{i=1}^j\{\xi_i^h-
        \mathbf{E}[\xi_i^h|\mathcal{F}_{i-1}]\},\qquad\quad
        R_j^h = 2K^2\sum_{i=1}^j
        \mathbf{E}\left[\left.\phi\left(\frac{|\xi_i^h|}{K}
        \right)\right|\mathcal{F}_{i-1}\right].
$$
Then we have 
$$
        \mathbf{E}^A\left[
        \max_{h=1,\ldots,N}
        \one_{R_n^h\le R}
        \max_{j\le n}M_j^h
        \right]\le
        \sqrt{8R\,\log\left(1+\frac{N}{\mathbf{P}[A]}\right)}+
        8K\,\log\left(1+\frac{N}{\mathbf{P}[A]}\right)
$$
for any event $A\in\mathcal{F}$.
If in addition $\|\xi_i^h\|_\infty\le 3U$ for all $i,h$, then
$$
        \mathbf{E}^A\left[
        \max_{h=1,\ldots,N}
        \one_{R_n^h\le R}
        \max_{j\le n}M_j^h
        \right]\le
        \sqrt{8R\,\log\left(1+\frac{N}{\mathbf{P}[A]}\right)}+
        8U\,\log\left(1+\frac{N}{\mathbf{P}[A]}\right)
$$
for any event $A\in\mathcal{F}$.
\end{cor}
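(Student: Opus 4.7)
The plan is a routine combination of the two immediately preceding results: Corollary \ref{cor:bernstein} supplies a Bernstein-type tail bound that serves as the hypothesis of the maximal lemma stated just above, and that lemma then converts this tail bound into the desired $L^1$-type estimate for the maximum over $h=1,\ldots,N$. Both the event $A$ and the indicator $\one_{R_n^h\le R}$ are carried through verbatim, so no further localization argument is needed.

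Concretely, for each $h$ I would set $X_h := \one_{R_n^h\le R}\max_{j\le n} M_j^h$. Since $M_0^h=0$, the inner maximum is nonnegative, so $X_h\ge 0$ and $|X_h|=X_h$; moreover, for every $\alpha>0$ one has
$$
\{|X_h|\ge\alpha\} = \{\max_{j\le n} M_j^h\ge\alpha \text{ and } R_n^h\le R\}.
$$
The first part of Corollary \ref{cor:bernstein}, applied to the sequence $(\xi_i^h)_{1\le i\le n}$, therefore gives
$$
\mathbf{P}[|X_h|\ge\alpha] \le \exp\left[-\frac{\alpha^2}{2(K\alpha+R)}\right]
$$
for every $h=1,\ldots,N$, which is precisely the hypothesis of the preceding maximal lemma with the same constants $K$ and $R$. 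Applying that lemma to $X_1,\ldots,X_N$ and the event $A\in\mathcal{F}$ immediately yields the first conclusion.

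For the second conclusion, under the additional assumption $\|\xi_i^h\|_\infty\le 3U$ the second part of Corollary \ref{cor:bernstein} gives the same tail bound for $X_h$ with $U$ in place of $K$, and a second application of the lemma (now with $U$ in place of $K$) produces the claimed inequality with $8U$ on the right-hand side.

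There is essentially no obstacle here: the only point worth flagging is that the maximal lemma is stated for $|X_i|$ rather than $X_i$, but this is harmless since $X_h$ is nonnegative by construction. The proof is thus a direct concatenation of the two ingredients already at hand, with no additional truncation or stopping-time manipulation required.
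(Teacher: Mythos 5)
Your proposal matches the paper's proof exactly: set $X_h = \one_{R_n^h\le R}\max_{j\le n}M_j^h$, observe $X_h\ge 0$ (hence $|X_h|=X_h$) since $M_0^h=0$, obtain the tail bound from Corollary \ref{cor:bernstein}, and apply the preceding maximal lemma (with $K$ or $U$ as appropriate). Correct, and essentially identical to the paper.
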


\begin{proof}
Apply the previous lemma with $X_h = \one_{R_n^h\le R}
\max_{j\le n}M_j^h$.  Note that as $M_0^h=0$, certainly 
$X_h\ge 0$.  Therefore $X_h = |X_h|$, and the requisite 
tail bounds are obtained immediately from Corollary 
\ref{cor:bernstein} above.
\end{proof}

\subsection{Proof of Theorem \ref{thm:chaining}}

We now proceed to the proof of Theorem \ref{thm:chaining}.  We follow 
closely the proof given by Massart \cite{Mas07}, Theorem 6.8 in the 
i.i.d.\ setting.  The general approach, by means of a chaining device 
with bracketing with adaptive truncation, is standard in empirical 
process theory.

Before we proceed to the proof, let us define the function
$$
        \Phi(x) := 16\,\mathcal{H}
        + 32\sqrt{Rx}
        + 16Kx,
$$
where $\mathcal{H}$ is as defined in Theorem \ref{thm:chaining}.
We claim that in order to prove the Theorem, it actually suffices 
to prove the estimate
$$
	\mathbf{E}^A\left[
        \sup_{\theta\in\Theta}
        \one_{R_n^\theta\le R}
        \max_{i\le n}M_i^\theta
        \right] \le 
        \Phi\left(
        \log\left(
        1+\frac{1}{\mathbf{P}[A]}
        \right)\right)
$$
for any event $A\subseteq F$.  Indeed, if this is the case, then choosing
$$
	A = F \cap \left\{
        \sup_{\theta\in\Theta}
        \one_{R_n^\theta\le R}
        \max_{i\le n}M_i^\theta \ge \Phi(x)
        \right\}
$$
allows us to estimate
$$
	\Phi(x) \le 
	\mathbf{E}^A\left[
        \sup_{\theta\in\Theta}
        \one_{R_n^\theta\le R}
        \max_{i\le n}M_i^\theta
        \right] \le 
        \Phi\left(
        \log\left(
        \frac{2}{\mathbf{P}[A]}
        \right)\right),
$$
from which the conclusion of Theorem \ref{thm:chaining} is immediate.  We 
therefore concentrate without loss of generality on obtaining the above 
estimate.

\begin{proof}[Proof of Theorem \ref{thm:chaining}]
We fix $n\in\mathbb{N}$, $K,R<\infty$, $F\in\mathcal{F}$ and $A\subseteq 
F$ throughout the proof.  Define $\delta_j=2^{-j}\sqrt{R}$ and 
$N_j=\mathcal{N}(n,\Theta,F,K,\delta_j)$ for $j\ge 0$.  We assume that 
$N_j<\infty$ for all $j$, otherwise there is nothing to prove.  Therefore, 
for each $j$, we can choose a collection
$\mathcal{B}_j=\{(\Lambda_i^{j,\rho},\Upsilon_i^{j,\rho})_{1\le i\le 
n}\}_{\rho=1,\ldots,N_j}$ that satisfies the conditions of Definition 
\ref{defn:brackets}, and these will remain fixed throughout the proof.
In particular, for every $j,\theta$, there exists $\rho(j,\theta)$ such 
that 
$$
        \Lambda_i^{j,\rho(j,\theta)}\le\xi_i^\theta\le
        \Upsilon_i^{j,\rho(j,\theta)}
        \quad\mbox{for all }i=1,\ldots,n.
$$
For notational simplicity, we will write
$$
        \Pi_i^{j,\theta}=\Upsilon_i^{j,\rho(j,\theta)},\qquad\quad
        \Delta_i^{j,\theta} = \Upsilon_i^{j,\rho(j,\theta)}-
        \Lambda_i^{j,\rho(j,\theta)}.
$$
At the heart of the proof is a chaining device: we introduce the
telescoping sum
\begin{multline*}
        \xi_i^\theta = 
        \{\xi_i^\theta - \Pi_i^{\tau_i^\theta,\theta}\wedge
        \Pi_i^{\tau_i^\theta-1,\theta}\} +
        \{\Pi_i^{\tau_i^\theta,\theta}\wedge
        \Pi_i^{\tau_i^\theta-1,\theta} - \Pi_i^{\tau_i^\theta-1,\theta}\} 
	\\ \mbox{}
	+
        \sum_{j=1}^{\tau_i^\theta-1}\{\Pi_i^{j,\theta}
        - \Pi_i^{j-1,\theta}\} +
        \Pi_i^{0,\theta},
\end{multline*}
where by convention $\Pi_i^{-1,\theta}=\Pi_i^{0,\theta}$.
The length of the chain is chosen adaptively:
$$
        \tau_i^\theta = \min\{j\ge 0:\Delta_i^{j,\theta}>a_j\}\wedge J.
$$
The levels $a_j>0$ and $J\ge 1$ will be determined later on (we will 
choose $a_j$ to control the second term in Corollary \ref{cor:unbounded}, 
and we will ultimately let $J\to\infty$).

It will be convenient to split the chain into three parts:
\begin{eqnarray}
\nonumber
        \xi_i^\theta &=& 
\label{eq:tmone}
        \Pi_i^{0,\theta} + \mbox{} \\
        &&
\label{eq:tmtwo}
        \sum_{j=0}^J
        (\xi_i^\theta - \Pi_i^{j,\theta}\wedge
        \Pi_i^{j-1,\theta})\one_{\tau_i^\theta=j} + \mbox{} \\
        &&
\label{eq:tmthree}
        \sum_{j=1}^J \left\{
        (\Pi_i^{j,\theta}\wedge\Pi_i^{j-1,\theta} - \Pi_i^{j-1,\theta})
        \one_{\tau_i^\theta=j} +
        (\Pi_i^{j,\theta}-\Pi_i^{j-1,\theta})\one_{\tau_i^\theta>j} 
        \right\}.
\end{eqnarray}
Denote by $b_i^{j,\theta}$ the summands in (\ref{eq:tmtwo})
by $c_i^{j,\theta}$ the summands in (\ref{eq:tmthree}), and
define the martingales
$A_i^\theta = \sum_{\ell=1}^i\{\Pi_\ell^{0,\theta}-
\mathbf{E}[\Pi_\ell^{0,\theta}|\mathcal{F}_{\ell-1}]\}$,
$B_i^{j,\theta} = \sum_{\ell=1}^i\{b_\ell^{j,\theta}-
\mathbf{E}[b_\ell^{j,\theta}|\mathcal{F}_{\ell-1}]\}$, and
$C_i^{j,\theta} = \sum_{\ell=1}^i\{c_\ell^{j,\theta}-
\mathbf{E}[c_\ell^{j,\theta}|\mathcal{F}_{\ell-1}]\}$.
We will control each martingale separately.

\textbf{Control of $\boldsymbol{A}^\theta$}.  As $\phi$ is convex and 
nondecreasing, and as $|\Pi_\ell^{0,\theta}-\xi_\ell^\theta|\le
|\Delta_\ell^{0,\theta}|$,
$$
        \phi\left(\frac{|\Pi_\ell^{0,\theta}|}{2K}\right) \le
        \phi\left(\frac{|\Pi_\ell^{0,\theta}-\xi_\ell^\theta|
        +|\xi_\ell^\theta|}{2K}\right) \le
        \frac{1}{2}\,
        \phi\left(\frac{|\Delta_\ell^{0,\theta}|}{K}\right) 
        +\frac{1}{2}\,
        \phi\left(\frac{|\xi_\ell^\theta|}{K}\right).
$$
Using Definition \ref{defn:brackets}, we find that
$$
        R_n^{0,\theta}:=
        8K^2\sum_{\ell=1}^n\mathbf{E}\left[\left.
        \phi\left(\frac{|\Pi_\ell^{0,\theta}|}{2K}\right) 
        \right|\mathcal{F}_{\ell-1}\right] \le
        2(\delta_0^2+R)=4R\quad
        \mbox{on }\{R_n^\theta\le R\}\cap F.
$$
Therefore
\begin{multline*}
        \mathbf{E}^A\left[
        \sup_{\theta\in\Theta}
        \one_{R_n^\theta\le R}
        \max_{i\le n}A_i^\theta
        \right] \le
        \mathbf{E}^A\left[
        \sup_{\theta\in\Theta}
        \one_{R_n^{0,\theta}\le 2(\delta_0^2+R)}
        \max_{i\le n}A_i^\theta
        \right] \\ \mbox{} \le
        \sqrt{32R\,
        \log\left(1+\frac{N_0}{\mathbf{P}[A]}\right)}+
        16K\,\log\left(1+\frac{N_0}{\mathbf{P}[A]}\right)
\end{multline*}
by Corollary \ref{cor:unbounded}, where we have used that $A\subseteq F$.

\textbf{Control of $\boldsymbol{B}^\theta$}.
Note that $b_\ell^{j,\theta}\le 0$, so that
$$
        b_\ell^{j,\theta} - 
        \mathbf{E}[b_\ell^{j,\theta}|\mathcal{F}_{\ell-1}] \le
        \mathbf{E}[
        (\Pi_\ell^{j,\theta}\wedge\Pi_\ell^{j-1,\theta} - \xi_\ell^\theta)
        \one_{\tau_\ell^\theta=j}       
        |\mathcal{F}_{\ell-1}]
        \le
        \mathbf{E}[\Delta_\ell^{j,\theta}\one_{\tau_\ell^\theta=j}      
        |\mathcal{F}_{\ell-1}].
$$
Consider first the case that $j<J$.  When $\tau_\ell^\theta=j$, we have 
$\Delta_\ell^{j,\theta}>a_j$.  Thus
$$
        b_\ell^{j,\theta} - 
        \mathbf{E}[b_\ell^{j,\theta}|\mathcal{F}_{\ell-1}] \le
        \frac{1}{a_j}\,
        \mathbf{E}[|\Delta_\ell^{j,\theta}|^2|\mathcal{F}_{\ell-1}]
        \le
        \frac{2K^2}{a_j}\,
        \mathbf{E}\left[\left.\phi\left(
        \frac{|\Delta_\ell^{j,\theta}|}{K}\right)
        \right|\mathcal{F}_{\ell-1}\right],
$$
where we have used $|x|^2\le 2K^2\phi(|x|/K)$.  In particular,
$$
        B_i^{j,\theta} \le
        \frac{2K^2}{a_j}
        \sum_{\ell=1}^i
        \mathbf{E}\left[\left.\phi\left(
        \frac{|\Delta_\ell^{j,\theta}|}{K}\right)
        \right|\mathcal{F}_{\ell-1}\right] \le
        \frac{\delta_j^2}{a_j}\quad\mbox{on }F,
$$
where we have applied Definition \ref{defn:brackets}.
As $A\subseteq F$, it follows that
$$
        \mathbf{E}^A\left[
        \sup_{\theta\in\Theta}
        \one_{R_n^\theta\le R}
        \max_{i\le n}B_i^{j,\theta}
        \right] \le \frac{\delta_j^2}{a_j}\quad
        \mbox{for }j<J.
$$
Now consider the case $j=J$.  We can estimate
$$
        B_i^{j,\theta} \le
        \sum_{\ell=1}^i
        \mathbf{E}[\Delta_\ell^{J,\theta}|\mathcal{F}_{\ell-1}]
        \le
        \left[i\sum_{\ell=1}^i
        \mathbf{E}[|\Delta_\ell^{J,\theta}|^2|\mathcal{F}_{\ell-1}]
        \right]^{1/2}\le \delta_J\sqrt{i}\quad\mbox{on }F,
$$
where we have applied the same computations as above.  It follows that
$$
        \mathbf{E}^A\left[
        \sup_{\theta\in\Theta}
        \one_{R_n^\theta\le R}
        \max_{i\le n}B_i^{J,\theta}
        \right] \le \delta_J\sqrt{n},
$$
where we have used that $A\subseteq F$.

\textbf{Control of $\boldsymbol{C}^\theta$}.
As $\Pi_\ell^{j,\theta}-\Pi_\ell^{j-1,\theta} = 
\Pi_\ell^{j,\theta}-\xi_\ell^\theta 
+\xi_\ell^\theta-\Pi_\ell^{j-1,\theta}$,
we have
$$
        -\Delta_\ell^{j-1,\theta}
        \le
        \Pi_\ell^{j,\theta}-\Pi_\ell^{j-1,\theta}
        \le
        \Delta_\ell^{j,\theta},\qquad\quad
        -\Delta_\ell^{j-1,\theta}
        \le
        \Pi_\ell^{j,\theta}\wedge\Pi_\ell^{j-1,\theta} - 
        \Pi_\ell^{j-1,\theta} \le 0.
$$
Therefore
$$
        -\Delta_\ell^{j-1,\theta}\one_{\tau_\ell^\theta\ge j}
        \le
        c_\ell^{j,\theta}
        \le 
        \Delta_\ell^{j,\theta}\one_{\tau_\ell^\theta>j}.
$$
As $\Delta_\ell^{j,\theta}\le a_j$ whenever $\tau_\ell^\theta>j$, we
find that
$$
        \|c_\ell^{j,\theta}\|_\infty \le a_{j-1}\vee a_j.
$$
Moreover, as $|c_\ell^{j,\theta}|\le\Delta_\ell^{j-1,\theta}\vee
\Delta_\ell^{j,\theta}\le \Delta_\ell^{j-1,\theta}+
\Delta_\ell^{j,\theta}$, we obtain using that $\phi$ is convex and
nondecreasing (in the same manner as above for the control of 
$\boldsymbol{A}^\theta$)
$$
        R_n^{j,\theta}:=
        8K^2\sum_{\ell=1}^n\mathbf{E}\left[\left.
        \phi\left(\frac{|c_\ell^{j,\theta}|}{2K}
        \right)\right|\mathcal{F}_{\ell-1}\right] \le
        2(\delta_{j-1}^2+\delta_j^2)
        \quad\mbox{on }F,
$$
where we have used Definition \ref{defn:brackets}.  
As $A\subseteq F$, we can therefore estimate
$$
        \mathbf{E}^A\left[
        \sup_{\theta\in\Theta}
        \one_{R_n^\theta\le R}
        \max_{i\le n}C_i^{j,\theta}
        \right] \le
        \mathbf{E}^A\left[
        \sup_{\theta\in\Theta}
        \one_{R_n^{j,\theta}\le 2(\delta_{j-1}^2+\delta_j^2)}
        \max_{i\le n}C_i^{j,\theta}
        \right].
$$
Now note that $c_\ell^{j,\theta}$ depends on $\theta$ only through the 
values of $\rho(0,\theta),\ldots,\rho(j,\theta)$.  In particular, for 
fixed $j$, the supremum of $\one_{R_n^{j,\theta}\le 
2(\delta_{j-1}^2+\delta_j^2)}\max_{i\le n}C_i^{j,\theta}$
as $\theta$ varies over $\Theta$ is in fact only the maximum over a finite 
collection of random variables, whose cardinality is bounded above by
the quantity
$$
        \textbf{N}_j:=\prod_{p=0}^jN_p.
$$ 
We therefore obtain the estimate
\begin{multline*}
        \mathbf{E}^A\left[
        \sup_{\theta\in\Theta}
        \one_{R_n^\theta\le R}
        \max_{i\le n}C_i^{j,\theta}
        \right] \\ \mbox{} \le
        \sqrt{16(\delta_{j-1}^2+\delta_j^2)\,
        \log\left(1+\frac{\mathbf{N}_j}{\mathbf{P}[A]}\right)}+
        \frac{8}{3}\,(a_{j-1}\vee a_j)\,
        \log\left(1+\frac{\mathbf{N}_j}{\mathbf{P}[A]}\right),
\end{multline*}
where we have applied Corollary \ref{cor:unbounded}.

\textbf{End of the proof}.  
Note that by construction
$$
	M_i^\theta = A_i^\theta + \sum_{j=0}^J B_i^{j,\theta} +
	\sum_{j=1}^J C_i^{j,\theta}
$$
for all $i,\theta$.  Collecting the above estimates gives
\begin{multline*}
        \mathbf{E}^A\left[
        \sup_{\theta\in\Theta}
        \one_{R_n^\theta\le R}
        \max_{i\le n}M_i^\theta
        \right] 
\\ \mbox{} 
	\le \delta_J\sqrt{n} +
        \delta_0\sqrt{32\,
        \log\left(1+\frac{N_0}{\mathbf{P}[A]}\right)}+
        16K\,\log\left(1+\frac{N_0}{\mathbf{P}[A]}\right)
        + \sum_{j=0}^{J-1}\frac{\delta_j^2}{a_j} 
\\ \mbox{}
        + \sum_{j=1}^J\left\{
                \delta_j\sqrt{80\,
        \log\left(1+\frac{\mathbf{N}_j}{\mathbf{P}[A]}\right)}+
        \frac{8}{3}\,(a_{j-1}\vee a_j)\,
        \log\left(1+\frac{\mathbf{N}_j}{\mathbf{P}[A]}\right)
        \right\}.
\end{multline*}
We aim to choose $a_j$ such that the $\log(1+\mathbf{N}_j/\mathbf{P}[A])$ 
terms disappear.  Set
$$
        a_j = \delta_j\left(
        \frac{8}{3}\,
        \log\left(1+\frac{\mathbf{N}_{j+1}}{\mathbf{P}[A]}\right)
        \right)^{-1/2}.
$$
Then $a_j$ is decreasing with increasing $j$, so $a_{j-1}\vee a_j=a_{j-1}$ 
and
\begin{multline*}
        \mathbf{E}^A\left[
        \sup_{\theta\in\Theta}
        \one_{R_n^\theta\le R}
        \max_{i\le n}M_i^\theta
        \right] 
	\\ \mbox{}
	\le \delta_J\sqrt{n} +
        16K\,\log\left(1+\frac{N_0}{\mathbf{P}[A]}\right)
        + 16\sum_{j=0}^J\delta_j
        \sqrt{\log\left(1+\frac{\mathbf{N}_j}{\mathbf{P}[A]}\right)}.
\end{multline*}
We now estimate as follows:
$$
        \sum_{j=0}^J\delta_j
        \sqrt{\log\left(1+\frac{\mathbf{N}_j}{\mathbf{P}[A]}\right)} \le
        \sum_{j=0}^J\delta_j
        \sqrt{\log\left(1+\frac{1}{\mathbf{P}[A]}\right)} + 
        \sum_{j=0}^J\delta_j\sum_{p=0}^j\sqrt{\log N_p},
$$
and
\begin{multline*}
        \sum_{j=0}^J\delta_j\sum_{p=0}^j\sqrt{\log N_p}
        \le     
        \sum_{p=0}^\infty\sqrt{\log N_p}
        \sum_{j=0}^J\delta_j\one_{p\le j}
        \le
        \sum_{p=0}^\infty\sqrt{\log N_p}
        \sum_{j=p}^\infty\delta_j
        = \mbox{} \\
        4\sum_{p=0}^\infty(\delta_p-\delta_{p+1})\,\sqrt{\log N_p} \le
        4\int_0^{\sqrt{R}}\sqrt{\log \mathcal{N}(n,\Theta,F,K,u)}\,du.
\end{multline*}
We obtain
$$
        \mathbf{E}^A\left[
        \sup_{\theta\in\Theta}
        \one_{R_n^\theta\le R}
        \max_{i\le n}M_i^\theta
        \right] \le 
        \delta_J\sqrt{n}
	+\Phi\left( 
        \log\left(1+\frac{1}{\mathbf{P}[A]}\right)\right).
$$
The result follows by letting $J\to\infty$.
\end{proof}

\bibliographystyle{acmtrans-ims}
\bibliography{ref}

\begin{thebibliography}{}

\bibitem{CMR05}
{\sc Capp{\'e}, O.}, {\sc Moulines, E.}, {\sc and} {\sc Ryd{\'e}n, T.} (2005).
\newblock {\em Inference in hidden {M}arkov models}.
\newblock Springer Series in Statistics. Springer, New York.
\newblock With Randal Douc's contributions to Chapter 9 and Christian P.
  Robert's to Chapters 6, 7 and 13, With Chapter 14 by Gersende Fort, Philippe
  Soulier and Moulines, and Chapter 15 by St{\'e}phane Boucheron and Elisabeth
  Gassiat.

\bibitem{Cs02}
{\sc Csisz{\'a}r, I.} (2002).
\newblock Large-scale typicality of {M}arkov sample paths and consistency of
  {MDL} order estimators.
\newblock {\em IEEE Trans. Inform. Theory\/}~\textbf{48},~6, 1616--1628.
\newblock Special issue on Shannon theory: perspective, trends, and
  applications.

\bibitem{CS00}
{\sc Csisz{\'a}r, I.} {\sc and} {\sc Shields, P.~C.} (2000).
\newblock The consistency of the {BIC} {M}arkov order estimator.
\newblock {\em Ann. Statist.\/}~\textbf{28},~6, 1601--1619.

\bibitem{Fin90}
{\sc Finesso, L.} (1990).
\newblock Consistent estimation of the order for {M}arkov and hidden {M}arkov
  chains.
\newblock Ph.D. thesis, Univ.\ of Maryland.

\bibitem{Kie93}
{\sc Kieffer, J.~C.} (1993).
\newblock Strongly consistent code-based identification and order estimation
  for constrained finite-state model classes.
\newblock {\em IEEE Trans. Inform. Theory\/}~\textbf{39},~3, 893--902.

\bibitem{Mas07}
{\sc Massart, P.} (2007).
\newblock {\em Concentration inequalities and model selection}. Lecture Notes
  in Mathematics, Vol. \textbf{1896}.
\newblock Springer, Berlin.
\newblock Lectures from the 33rd Summer School on Probability Theory held in
  Saint-Flour, July 6--23, 2003, With a foreword by Jean Picard.

\bibitem{vdG00}
{\sc van~de Geer, S.~A.} (2000).
\newblock {\em Applications of empirical process theory}. Cambridge Series in
  Statistical and Probabilistic Mathematics, Vol.~\textbf{6}.
\newblock Cambridge University Press, Cambridge.

\bibitem{VW96}
{\sc van~der Vaart, A.~W.} {\sc and} {\sc Wellner, J.~A.} (1996).
\newblock {\em Weak convergence and empirical processes}.
\newblock Springer Series in Statistics. Springer-Verlag, New York.
\newblock With applications to statistics.

\end{thebibliography}

\end{document}